\begin{document}
\newcommand{\beq}{\begin{eqnarray}}
\newcommand{\eeq}{\end{eqnarray}}
\newcommand{\beas}{\begin{eqnarray*}}
\newcommand{\enas}{\end{eqnarray*}}
\newcommand{\bea}{\begin{eqnarray}}
\newcommand{\ena}{\end{eqnarray}}
\newcommand{\nn}{\nonumber}
\newcommand{\bt}{\boldsymbol{\theta}}
\newcommand{\bsig}{\boldsymbol{\sigma}}
\newcommand{\D}{$\dagger$}
\newcommand{\dd}{$\ddagger$}
\newtheorem{theorem}{Theorem}[section]
\newtheorem{corollary}{Corollary}[section]
\newtheorem{conjecture}{Conjecture}[section]
\newtheorem{proposition}{Proposition}[section]
\newtheorem{remark}{Remark}[section]
\newtheorem{lemma}{Lemma}[section]
\newtheorem{definition}{Definition}[section]
\newtheorem{condition}{Condition}[section]
\title{Multivariate concentration of measure type results using exchangeable pairs and size biasing}
\author{Subhankar Ghosh\footnote{Department of Mathematics, University of Southern California, Los Angeles, CA-90089}}
\maketitle
\long\def\symbolfootnote[#1]#2{\begingroup%
\def\thefootnote{\fnsymbol{footnote}}\footnote[#1]{#2}\endgroup}
\symbolfootnote[0]{2000 {\em Mathematics Subject Classification}: Primary 60E15; Secondary 60C05,62G10.}
\symbolfootnote[0]{{\em Keywords}: Large deviations, concentration of measure, exchangeable pairs, Stein's method.}
\begin{abstract}
Let $(\mathbf{W,W'})$ be an exchangeable pair of vectors in $\mathbb{R}^k$. Suppose this pair satisfies
\beas
E(\mathbf{W}'|\mathbf{W})=(I_k-\Lambda)\mathbf{W}+\mathbf{R(W)}.
\enas
If $||\mathbf{W-W'}||_2\le K$ and $\mathbf{R(W)}=0$, then concentration of measure results of following form is proved for all $\mathbf{w}\succeq 0$ when the moment generating function of $\mathbf{W}$ is finite.
\beas
P(\mathbf{W}\succeq\mathbf{w}),P(\mathbf{W}\preceq -\mathbf{w})\le \exp\left(-\frac{||\mathbf{w}||_2^2}{2K^2\nu_1}\right),
\enas
for an explicit constant $\nu_1$, where $\succeq$ stands for coordinate wise $\ge$ ordering.

This result is applied to examples like complete non degenerate U-statistics. Also, we deal with the example of doubly indexed permutation statistics where $\mathbf{R(W)}\neq 0$ and obtain similar concentration of measure inequalities. Practical examples from doubly indexed permutation statistics include Mann-Whitney-Wilcoxon statistic and random intersection of two graphs. Both these two examples are used in nonparametric statistical testing. We conclude the paper with a multivariate generalization of a recent concentration result due to Ghosh and Goldstein \cite{cnm} involving bounded size bias couplings and a simple application.
\end{abstract}
\section{Introduction}
Stein's method for normal approximation was devised to obtain rates of convergence in central limit theorems. Exchangeable pairs $(W,W')$ satisfying the linearity condition
\beas
E(W'|W)=(1-\lambda) W\quad\mbox{for some $\lambda\in(0,1)$},
\enas
are often useful for obtaining Kolmogorov distance bounds between the distribution of $W$ and standard normal distribution using Stein's method.
The reader is referred to \cite{stein} for further details. This condition was generalized in \cite{rinott} to include a remainder term,
\bea\label{exch-rotar}
E(W'|W)=(1-\lambda)W+R(W),
\ena
for some measurable function $R(\cdot)$. Using (\ref{exch-rotar}), the authors obtained rate of convergence in the central limit theorem for weighted U statistics and antivoter model. Although this condition is quite general, obtaining a usable closed form expression for the remainder term $R(W)$ can be challenging.
\par Recently Reinert and R\"{o}llin \cite{reinert} proposed a multivariate formulation of (\ref{exch-rotar}). In particular, suppose it is possible to construct an exchangeable multivariate tuple $(\mathbf{W},\mathbf{W}')\in\mathbb{R}^k\times\mathbb{R}^k$ so that the following relation holds for some matrix $\Lambda$ and $\mathbf{R}:\mathbb{R}^k\rightarrow\mathbb{R}^k$,
\bea
E(\mathbf{W}'|\mathbf{W})=(I_k-\Lambda)\mathbf{W}+\mathbf{R(W)}.\label{gen-exch}
\ena
Under (\ref{gen-exch}), the authors obtain bounds in normal approximation for a rich class of smooth and nonsmooth test functions of $\mathbf{W}$.
 \par  R\v{a}ic \cite{raic}, Chatterjee \cite{cha} and Ghosh and Goldstein \cite{cnm} obtained concentration of measure type inequalities obtained using tools from Stein's method. R\v{a}ic used the idea of Cramer transform while Chatterjee used a generalized version of exchangeable pairs. Ghosh and Goldstein \cite{cnm} obtained concentration results for centered and scaled positive random variables using size biased couplings. In this paper we will obtain some new concentration of measure results under the framework of (\ref{gen-exch}). A general concentration result is contained in Theorem \ref{thm-main1} for $\mathbf{R(W)}=0$, while the case of doubly indexed permutation statistics is also handled later although it does not satisfy this condition.

The paper is organized as follows. In Section 2, we state and prove Theorem \ref{thm-main1}. In Section 3, we apply Theorem \ref{thm-main1} to obtain concentration of measure results for complete nondegenerate U statistics. In Section 4, we obtain concentration results for doubly indexed permutation statistics which can not be obtained by applying Theorem \ref{thm-main1}. The results for doubly indexed permutation statistics are used to obtain concentration of measure results for two cases of practical importance, the Mann-Whitney-Wilcoxon rank statistic and the random intersection of interpoint distance based graphs, both of which are important in nonparametric hypothesis testing.

\section{The main result}
In this section and the following, for $\mathbf{a,b}\in\mathbb{R}^k$, we define the partial ordering $\succeq$ by
$$ \mathbf{a}\succeq\mathbf{b}\Leftrightarrow a_i\ge b_i\quad\mbox{for $1\le i\le k$}.$$
Also, we define the order $\preceq$ by
$$\mathbf{a}\preceq \mathbf{b}\Leftrightarrow \mathbf{b}\succeq \mathbf{a}.$$ The definition for $`\succ'$ and $`\prec'$ is similar. Also, for any $\bt\in\mathbb{R}^k$, $\bt^t$ stands for transpose.
The first theorem of this paper is stated below.
\begin{theorem}\label{thm-main1}
Suppose $(\mathbf{W,W'})\in\mathbb{R}^k\times \mathbb{R}^k$ is an exchangeable vector tuple satisfying (\ref{gen-exch}) with $\mathbf{R(W)}=\mathbf{0}$ that is
 \bea
 E(\mathbf{W}'|\mathbf{W})=(I_k-\Lambda)\mathbf{W},\label{gen-exch-0}
 \ena for some invertible matrix $\Lambda\in M_k(\mathbb{R})$, the set of $k\times k$ real matrices. Also assume $||\mathbf{W-W}'||_2\le K$ for constant $K$. If $m(\bt)=E(e^{\bt^t\mathbf{W}})<\infty$ for all $\bt\in\mathbb{R}^k$, then for any $\mathbf{w}\succeq \mathbf{0}$,
\bea\label{right}
P(\mathbf{W}\succeq\mathbf{w}),P(\mathbf{W}\preceq -\mathbf{w})\le \exp\left(-\frac{||\mathbf{w}||_2^2}{2K^2\nu_1}\right),
\ena
where $\nu_1=1/\sigma_1(\Lambda)$, with $\sigma_1(\Lambda)$ denoting the smallest singular value of $\Lambda$ henceforth.

Also the individual coordinate random variables satisfy the following inequalities
\bea
P(W_i\ge w_i),P(W_i\le -w_i)\le \exp\left(-\frac{w_i^2}{2K^2\nu_1}\right)\quad\mbox{for $i=1,2,\ldots,k$}.\label{con-coord}
\ena
\end{theorem}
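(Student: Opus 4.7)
The plan is to control the moment generating function $m(\boldsymbol{\theta})=E[e^{\boldsymbol{\theta}^t\mathbf{W}}]$ by deriving a differential inequality from a Stein identity and then apply Chernoff's method. First, I would use exchangeability of $(\mathbf{W},\mathbf{W}')$ (swapping the pair flips the sign of $\boldsymbol{\alpha}^t(\mathbf{W}-\mathbf{W}')$) to obtain the basic antisymmetrization identity
\begin{equation*}
E\bigl[(e^{\boldsymbol{\theta}^t\mathbf{W}} - e^{\boldsymbol{\theta}^t\mathbf{W}'})\,\boldsymbol{\alpha}^t(\mathbf{W}-\mathbf{W}')\bigr] = 2\, E\bigl[e^{\boldsymbol{\theta}^t\mathbf{W}}\boldsymbol{\alpha}^t(\mathbf{W}-\mathbf{W}')\bigr]
\end{equation*}
for any $\boldsymbol{\alpha}\in\mathbb{R}^k$. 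Conditioning on $\mathbf{W}$ in the right-hand side and invoking (\ref{gen-exch-0}) converts it to $2E[e^{\boldsymbol{\theta}^t\mathbf{W}}\boldsymbol{\alpha}^t\Lambda\mathbf{W}]$. The decisive choice is $\boldsymbol{\alpha}=(\Lambda^t)^{-1}\boldsymbol{\theta}$, available because $\Lambda$ is invertible; this aligns $\boldsymbol{\alpha}^t\Lambda\mathbf{W}$ with $\boldsymbol{\theta}^t\mathbf{W}$ and yields $2\boldsymbol{\theta}^t\nabla m(\boldsymbol{\theta})$ on the right.

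Next I would bound the antisymmetric form by the elementary inequality $|e^x-e^y|\le|x-y|\max(e^x,e^y)$ combined with two applications of Cauchy--Schwarz. Since $\|\mathbf{W}-\mathbf{W}'\|_2\le K$, we have $|\boldsymbol{\theta}^t(\mathbf{W}-\mathbf{W}')|\le K\|\boldsymbol{\theta}\|_2$ and $|\boldsymbol{\alpha}^t(\mathbf{W}-\mathbf{W}')|\le K\|\boldsymbol{\alpha}\|_2$. The singular value enters through the operator norm bound $\|\boldsymbol{\alpha}\|_2=\|(\Lambda^t)^{-1}\boldsymbol{\theta}\|_2\le\|\Lambda^{-1}\|_{\mathrm{op}}\|\boldsymbol{\theta}\|_2=\nu_1\|\boldsymbol{\theta}\|_2$. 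Exchangeability gives $E[e^{\boldsymbol{\theta}^t\mathbf{W}'}]=m(\boldsymbol{\theta})$, whence $E[\max(e^{\boldsymbol{\theta}^t\mathbf{W}},e^{\boldsymbol{\theta}^t\mathbf{W}'})]\le 2m(\boldsymbol{\theta})$. Combining everything,
\begin{equation*}
|\boldsymbol{\theta}^t\nabla m(\boldsymbol{\theta})| \;\le\; K^2\nu_1\|\boldsymbol{\theta}\|_2^2\, m(\boldsymbol{\theta}).
\end{equation*}

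I would then upgrade this to a sub-Gaussian MGF bound. Setting $L(\boldsymbol{\theta})=\log m(\boldsymbol{\theta})$ and integrating along the ray $s\mapsto s\boldsymbol{\theta}$, one finds $\tfrac{d}{ds}L(s\boldsymbol{\theta})=s^{-1}(s\boldsymbol{\theta})^t\nabla L(s\boldsymbol{\theta})\le sK^2\nu_1\|\boldsymbol{\theta}\|_2^2$; integrating from $0$ to $1$ with $L(\mathbf{0})=0$ gives $m(\boldsymbol{\theta})\le\exp(K^2\nu_1\|\boldsymbol{\theta}\|_2^2/2)$. Markov's inequality for any $\boldsymbol{\theta}\succeq\mathbf{0}$ then yields
\begin{equation*}
P(\mathbf{W}\succeq\mathbf{w})\;\le\; e^{-\boldsymbol{\theta}^t\mathbf{w}}m(\boldsymbol{\theta})\;\le\;\exp\!\left(-\boldsymbol{\theta}^t\mathbf{w}+K^2\nu_1\|\boldsymbol{\theta}\|_2^2/2\right),
\end{equation*}
and the choice $\boldsymbol{\theta}=\mathbf{w}/(K^2\nu_1)$, which is componentwise non-negative since $\mathbf{w}\succeq\mathbf{0}$, produces exactly $-\|\mathbf{w}\|_2^2/(2K^2\nu_1)$ in the exponent. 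The lower tail $P(\mathbf{W}\preceq-\mathbf{w})$ follows by applying the same argument to the exchangeable pair $(-\mathbf{W},-\mathbf{W}')$, which satisfies (\ref{gen-exch-0}) with the same $\Lambda$ and the same $K$; the coordinate inequalities (\ref{con-coord}) follow by specializing to $\boldsymbol{\theta}=\theta\,\mathbf{e}_i$ with $\theta>0$.

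The hardest conceptual step is choosing the test vector $\boldsymbol{\alpha}=(\Lambda^t)^{-1}\boldsymbol{\theta}$: the antisymmetric integrand then contains two \emph{different} linear functionals of $\mathbf{W}-\mathbf{W}'$ (one arising from the exponent, one from the $\Lambda^{-1}$ rotation), and it is precisely the operator-norm cost of this mismatch that inserts $\nu_1=1/\sigma_1(\Lambda)$ into the final constant. A minor technical point is justifying interchange of gradient and expectation in $\nabla m(\boldsymbol{\theta})=E[\mathbf{W}e^{\boldsymbol{\theta}^t\mathbf{W}}]$, which is immediate from the hypothesis that $m$ is finite on all of $\mathbb{R}^k$.
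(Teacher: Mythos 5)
Your proof is correct and follows the same underlying strategy as the paper: derive a self-bounding differential inequality for $m(\boldsymbol{\theta})$ from the Stein identity and exchangeability, conclude a sub-Gaussian MGF bound, and finish with Chernoff. The algebraic details differ in two compensating ways worth noting. First, the paper premultiplies the antisymmetrized Stein identity by $\Lambda^{-1}$ and bounds the full gradient norm $\|\nabla m(\boldsymbol{\theta})\|_2 \le \tfrac{1}{2}K^2\nu_1\|\boldsymbol{\theta}\|_2\,m(\boldsymbol{\theta})$, then uses the mean value theorem on $\log m$; you instead test against the single vector $\boldsymbol{\alpha}=(\Lambda^t)^{-1}\boldsymbol{\theta}$ and bound only the radial derivative $|\boldsymbol{\theta}^t\nabla m(\boldsymbol{\theta})|$, which is all that integration along the ray requires. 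Second, you use $|e^x-e^y|\le|x-y|\max(e^x,e^y)$ together with $E[\max(e^{\boldsymbol{\theta}^t\mathbf{W}},e^{\boldsymbol{\theta}^t\mathbf{W}'})]\le 2m(\boldsymbol{\theta})$, which gives $|\boldsymbol{\theta}^t\nabla m(\boldsymbol{\theta})|\le K^2\nu_1\|\boldsymbol{\theta}\|_2^2 m(\boldsymbol{\theta})$ — a factor of $2$ weaker than what the paper's $|e^x-e^y|\le\tfrac{1}{2}|x-y|(e^x+e^y)$ would give; however, integrating $\frac{d}{ds}\log m(s\boldsymbol{\theta})\le K^2\nu_1\|\boldsymbol{\theta}\|_2^2\, s$ from $0$ to $1$ produces the extra $\tfrac{1}{2}$ that the paper's MVT step (which crudely bounds $\|\mathbf{z}\|_2\le\|\boldsymbol{\theta}\|_2$) throws away. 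The two factors of $2$ cancel and you land on the same final bound $\exp(-\|\mathbf{w}\|_2^2/(2K^2\nu_1))$. Incidentally, combining the paper's sharper elementary inequality with your sharper integration step would tighten the exponent to $\exp(-\|\mathbf{w}\|_2^2/(K^2\nu_1))$, so neither proof is optimal in its constant, though both establish the stated result.
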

\begin{remark}\label{rem-thm}
If exact value for $\nu_1$ is not available, we can use upper bounds on $\nu_1$ instead. For example, since
\bea
\sigma_1^2(\Lambda)\ge \frac{\mbox{det}(\Lambda^t\Lambda)}{\mbox{trace}(\Lambda^t\Lambda)^{k-1}}=l^2,\label{bd-useful}
\ena
we obtain $1/l\ge \nu_1$.
Thus we obtain that the right hand side of (\ref{right}) can be bounded by $\exp(-(l||\mathbf{w}||_2^2)/2K^2)$.
\end{remark}
Before we begin the proof, we note the following inequality which follows by convexity of the exponential function
\beas
\frac{e^y-e^x}{y-x}=\int_0^1e^{ty+(1-t)x}dt \le \int_0^1(te^y+(1-t)e^x)dt =\frac{e^y+e^x}{2} \quad \mbox{for all $x \not = y$.}
\enas
Hence
\bea\label{exp-diff}
\frac{|e^{\alpha x}-e^{\alpha y}|}{|x-y|}\le \frac{|\alpha|(e^{\alpha x}+e^{\alpha y})}{2}.
\ena
Next we give the proof of Theorem \ref{thm-main1}.
\begin{proof}
The gradient vector of $m(\bt)$ is given by
\bea
\nabla m(\boldsymbol{\theta})=\left(\frac{\partial (m(\boldsymbol{\theta}))}{\partial {\theta}_i}\right)_{i=1}^k=E(\mathbf{W}e^{\boldsymbol{\theta}^t\mathbf{W}}).\label{expr-grad}
\ena
Using (\ref{gen-exch-0}), we obtain,
\bea
\nabla m(\boldsymbol{\theta})= E(\mathbf{W}e^{\boldsymbol{\theta}^t\mathbf{W}})&=& E((\mathbf{W-W'})e^{\boldsymbol{\theta}^t\mathbf{W}})+E(\mathbf{W'}e^{\boldsymbol{\theta}^t \mathbf{W}})\nonumber\\
&=&E((\mathbf{W-W'})e^{\boldsymbol{\theta}^t\mathbf{W}})+E(E(\mathbf{W'|W})e^{\boldsymbol{\theta}^t \mathbf{W}})\nonumber\\
&=& E((\mathbf{W-W'})e^{\boldsymbol{\theta}^t\mathbf{W}})+(I_k-\Lambda)E(\mathbf{W}e^{\boldsymbol{\theta}^t \mathbf{W}})\nonumber.
\ena
Changing sides we obtain
\bea\label{exch-0}
\Lambda E(\mathbf{W}e^{\boldsymbol{\theta}^t\mathbf{W}})&=&E((\mathbf{W-W'})e^{\boldsymbol{\theta}^t\mathbf{W}}).
\ena
Since $(\mathbf{W,W'})$ is exchangeable, we have
$$
E((\mathbf{W-W'})e^{\boldsymbol{\theta}^t\mathbf{W}})=E((\mathbf{W'-W})e^{\boldsymbol{\theta}^t\mathbf{W'}})=-E((\mathbf{W-W'})e^{\boldsymbol{\theta}^t\mathbf{W}'}),
$$
implying
\bea\label{intro-W'-0}
E((\mathbf{W-W'})e^{\boldsymbol{\theta}^t\mathbf{W}})=\frac{1}{2}E((\mathbf{W-W'})(e^{\boldsymbol{\theta}^t\mathbf{W}}-e^{\boldsymbol{\theta}^t\mathbf{W}'})).
\ena
Using (\ref{exch-0}) and (\ref{intro-W'-0}), we obtain
\bea
\Lambda E(\mathbf{W}e^{\boldsymbol{\theta}^t\mathbf{W}})&=&\frac{1}{2}E((\mathbf{W-W'})(e^{\boldsymbol{\theta}^t\mathbf{W}}-e^{\boldsymbol{\theta}^t\mathbf{W}'})).
\ena
Premultiplying both sides by $\Lambda^{-1}$, we have
\beas
E(\mathbf{W}e^{\boldsymbol{\theta}^t\mathbf{W}})=\frac{1}{2}E(\Lambda^{-1}(\mathbf{W-W'})(e^{\boldsymbol{\theta}^t\mathbf{W}}-e^{\boldsymbol{\theta}^t\mathbf{W}'})).
\enas
Using (\ref{expr-grad}) and (\ref{exp-diff}), we obtain
\bea
||\nabla m(\boldsymbol{\theta})||_2=||E(\mathbf{W}e^{\boldsymbol{\theta}^t\mathbf{W}})||_2&=&\frac{1}{2}||E(\Lambda^{-1}(\mathbf{W-W'})(e^{\boldsymbol{\theta}^t\mathbf{W}}-e^{\boldsymbol{\theta}^t\mathbf{W}'}))||_2\nonumber\\
&\le & \frac{1}{2}E(||\Lambda^{-1}(\mathbf{W-W'})||_2|e^{\boldsymbol{\theta}^t\mathbf{W}}-e^{\boldsymbol{\theta}^t\mathbf{W}'}|)\nonumber\\
&\le & \frac{1}{2}E(||\Lambda^{-1}||_2||\mathbf{W-W'}||_2|e^{\boldsymbol{\theta}^t\mathbf{W}}-e^{\boldsymbol{\theta}^t\mathbf{W}'}|)\nonumber\\
&\le & \frac{1}{4}E\left(||\Lambda^{-1}||_2||\mathbf{W-W'}||_2|\boldsymbol{\theta}^t(\mathbf{W-W'})|(e^{\boldsymbol{\theta}^t\mathbf{W}}+e^{\boldsymbol{\theta}^t\mathbf{W}'})\right),\label{ineq-grad-bd}
\ena
where, in the above calculations, for any matrix $A\in M_k(\mathbb{R})$, $||A||_2$ is the spectral norm of $A$ that is
$$ ||A||_2=\sup_{\scriptsize{\begin{array}{c}\mathbf{x}\in\mathbb{R}^k\\||\mathbf{x}||_2=1\end{array}}}||A\mathbf{x}||_2=\lambda^\frac{1}{2}_k(A^tA),$$ where $\lambda_1\le\lambda_2\le\cdots\le\lambda_k$ denote the  eigenvalues of $A^t A$. Denoting $\Lambda^{-t}=(\Lambda^{-1})^t$ and using $(\Lambda^t)^{-1}=(\Lambda^{-1})^t$, we have
 \beas
||\Lambda^{-1}||_2=\lambda^\frac{1}{2}_k(\Lambda^{-t}\Lambda^{-1})=\lambda^\frac{1}{2}_k((\Lambda\Lambda^t)^{-1}))=1/(\lambda^\frac{1}{2}_1(\Lambda\Lambda^t))=1/\sigma_1(\Lambda)=\nu_1.
 \enas
  Hence, using Cauchy Schwarz inequality, exchangeability of the tuple $(\mathbf{W,W'})$ and $||\mathbf{W}-\mathbf{W}'||_2\le K$, (\ref{ineq-grad-bd}) yields
\bea
||\nabla m(\boldsymbol{\theta})||_2&\le & \frac{||\boldsymbol{\theta}||_2\nu_1}{4}E(||\mathbf{W-W'}||_2^2(e^{\boldsymbol{\theta}^t\mathbf{W}}+e^{\boldsymbol{\theta}^t\mathbf{W}'}))\nonumber\\ &\le &\frac{K^2||\boldsymbol{\theta}||_2\nu_1}{4}E(e^{\boldsymbol{\theta}^t\mathbf{W}}+e^{\boldsymbol{\theta}^t\mathbf{W}'})\nonumber\\
&\le &\frac{K^2||\boldsymbol{\theta}||_2\nu_1}{2}Ee^{\boldsymbol{\theta}^t\mathbf{W}}=\frac{K^2||\boldsymbol{\theta}||_2\nu_1}{2} m(\boldsymbol{\theta}) \label{ineq-R}.
\ena
Since
\beas
\nabla(\log(m(\boldsymbol{\theta})))=\frac{\nabla m(\boldsymbol{\theta})}{m(\boldsymbol{\theta})},
\enas
we obtain, using (\ref{ineq-R}),
\bea
||\nabla(\log(m(\boldsymbol{\theta})))||_2\le \frac{K^2||\boldsymbol{\theta}||_2\nu_1}{2}.\label{ineq-log}
\ena
Hence, using $m(\mathbf{0})=1$ and the mean value theorem on $\log(m(\bt))$, we have
\bea
\log(m(\boldsymbol{\theta}))=\nabla(\log(m(\mathbf{z})))\cdot \boldsymbol{\theta}\le ||\nabla(\log(m(\mathbf{z})))||_2||\boldsymbol{\theta}||_2,\label{taylor}
\ena
where $\mathbf{z}\in \mathbb{R}^k$ is a vector in the line segment joining $\mathbf{0}$ to $\boldsymbol{\theta}$. Since (\ref{ineq-log}) holds for any arbitrary $\boldsymbol{\theta}\in \mathbb{R}^k$ and for $\mathbf{z}$ in particular, (\ref{taylor}) yields
\beas
\log(m(\boldsymbol{\theta}))\le \frac{K^2||\mathbf{z}||_2\nu_1}{2}||\boldsymbol{\theta}||_2\le \frac{K^2||\boldsymbol{\theta}||^2_2\nu_1}{2}.
\enas
Hence
\bea
m(\boldsymbol{\theta})\le \exp\left(\frac{K^2\nu_1||\boldsymbol{\theta}||^2_2}{2}\right).
\ena
Hence, for arbitrary $\mathbf{w}\succeq \mathbf{0}$, fixed, for any $\boldsymbol{\theta}\succeq \mathbf{0}$,
\bea
P(\mathbf{W}\succeq \mathbf{w})& \le & P(\boldsymbol{\theta}^t\mathbf{W}\ge \boldsymbol{\theta}^t\mathbf{w})\le e^{-\boldsymbol{\theta}^t\mathbf{w}}m(\boldsymbol{\theta})\label{gen-theta}\\
&\le & e^{-\boldsymbol{\theta}^t\mathbf{w}}\exp\left(\frac{K^2\nu_1||\boldsymbol{\theta}||^2_2}{2}\right)=\prod_{i=1}^k \exp\left(-{\theta}_iw_i+\frac{K^2\nu_1{\theta}_i^2}{2}\right).\label{ineq-tominimise}
\ena
We can minimize each term in the product in the right hand side of (\ref{ineq-tominimise}) individually. Using ${\theta}_i=w_i/(K^2\nu_1)$ in (\ref{ineq-tominimise}), we obtain
\beas
P(\mathbf{W}\succeq \mathbf{w})\le \prod_{i=1}^k \exp\left(-\frac{w_i^2}{2K^2\nu_1}\right)=\exp\left(-\frac{||\mathbf{w}||^2_2}{2K^2\nu_1}\right).
\enas
The other inequality for $P(\mathbf{W}\preceq -\mathbf{w})$ is also derived similarly by considering $\bt\preceq\mathbf{0}$.
\par Coming to the inequalities for the individual coordinates, take $\boldsymbol{\theta}=(0,\ldots,\theta_i,\ldots,0)$ that is zero in all coordinates leaving the $i$th one. Then we obtain
\beas
P(W_i\ge w_i)\le e^{-\theta_i w_i} E(e^{\theta_i W_i})=e^{-\theta_i w_i} m(\boldsymbol{\theta})\le e^{-\theta_iw_i}\exp\left(\frac{K^2\nu_1\theta_i^2}{2}\right).
\enas
Letting $\theta_i=w_i/(K^2\nu_1)$ as before yields (\ref{con-coord}). The left tail bound is similar.
\end{proof}
\section{An application from U-statistics}
Let $\mathbf{X}=(X_1,X_2,\ldots,X_n)$ be a vector of i.i.d random variables and $\psi:\mathbb{R}^d\rightarrow\mathbb{R}$ be a measurable and symmetric function and $E\psi(X_1,X_2,\ldots,X_d)=0$. The complete non standardized U-statistics of degree $d$ corresponding to the kernel function $\psi$ is given by
 $$
U_d(\mathbf{X})=\sum_{1\le j_1<j_2<\ldots<j_d\le n} \psi(X_{j_1},X_{j_2},\ldots,X_{j_d}).
$$
For  $1\le k\le d$, we define following the notations in \cite{reinert2} $$ \psi_k(X_1,X_2,\ldots,X_k)=E\psi(X_1,X_2,\ldots,X_k,X_{k+1},X_{k+2},\ldots,X_d|X_1,X_2,\ldots,X_k).$$ If $\mathbf{j}=\{j_1,j_2,\ldots,j_k\}\subset\{1,2,\ldots,n\}$, we define
 $$ \psi_k(\mathbf{j})=\psi_k(X_{j_1},X_{j_2},\ldots,X_{j_k})$$and the corresponding non standardised U statistics is defined by
$$ U_k(\mathbf{X})=\sum_{|\mathbf{j}|=k}\psi_k(\mathbf{j}).$$
Clearly, $U_d(\mathbf{X})$ is the complete nonstandardised U-statistics corresponding to $\psi$.
U-statistics were introduced in \cite{hoeff} and arise naturally in nonparametric statistics. Rinott and Rotar \cite{rinott} used Stein's method of exchangeable pairs to obtain Kolmogorov distance bounds to normal distribution for weighted U statistics. In \cite{houdre, rado} concentration of measure results were obtained. While the results in \cite{houdre} apply to U-statistics of order two only, the results in \cite{rado} are very general although applicable to degenerate U-statistics only that is the case when $P(\psi_1(X_1)=0)=1$. In the present section, we will obtain concentration of measure results for non degenerate U-statistics and thus will be working with the assumption  $P(\psi_1(X_1)=0)<1$ henceforth. We will be working with another restriction $||\psi||_\infty\le b$.

 Let us consider the following standardised U statistics for $i=1,\ldots,d$
 $$ W_i=n^\frac{1}{2}{n\choose i}^{-1}U_i(\mathbf{X}).$$ It has been shown in \cite{lee} that $\mbox{var} W_i\asymp 1$ and furthermore in \cite{reinert2} it was shown that we can embed $W_d$ in a vector $\mathbf{W}$ so that (\ref{gen-exch-0}) holds. An application of Theorem \ref{thm-main1} then yields the following result.
 \begin{theorem}\label{prop-main-u}
Let $X_1,X_2,\ldots,X_n$ be a collection of i.i.d variables. Suppose $\psi:\mathbb{R}^d\rightarrow\mathbb{R}$ is a symmetric, measurable function so that $||\psi||_\infty\le b$. Assume $E\psi(X_1,X_2,\ldots,X_d)=0$ and $P(E(\psi(X_1,X_2,\ldots,X_d)|X_1)=0)<1$. If $W_d$ denotes the U-statistics
$$
W_d=n^\frac{1}{2}{n\choose d}^{-1}\sum_{|\mathbf{j}|=d}\psi(X_{j_1},X_{j_2},\ldots,X_{j_d}),
$$
then $W_d$ satisfies
$$
P(W_d\ge t), P(W_d\le -t)\le \exp\left(-\frac{t^2\kappa_d^{1/2}}{8b^2\gamma_d^2}\right),
$$
where
$$
\gamma_d=\left(\frac{d(d+1)(2d+1)}{6}\right)^\frac{1}{2}\quad\mbox{and}\quad\kappa_d=\frac{(d!)^2 3^{d-1}}{(d(d+1)(2d+1))^{d-1}}.
$$
\end{theorem}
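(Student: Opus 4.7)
The plan is to apply Theorem \ref{thm-main1} coordinate-wise to a multivariate embedding of $W_d$ that is essentially the one indicated in \cite{reinert2}. Let $\mathbf{W}=(W_1,\ldots,W_d)^t$ with $W_i=n^{1/2}\binom{n}{i}^{-1}U_i(\mathbf{X})$, and construct $\mathbf{W}'$ by picking an index $I$ uniformly on $\{1,\ldots,n\}$ independently of $\mathbf{X}$, drawing an independent copy $X_I'$ of $X_I$, and recomputing all $W_i$ after the swap. Exchangeability of $(\mathbf{W},\mathbf{W}')$ is immediate from the i.i.d.\ assumption, so the one thing remaining before Theorem \ref{thm-main1} can be invoked is to identify $\Lambda$, to bound $\|\mathbf{W}-\mathbf{W}'\|_2$, and to estimate $\nu_1=1/\sigma_1(\Lambda)$.

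The linearity condition is verified by a direct conditional expectation computation: on the event $I\in\mathbf{j}$, integrating $X_I'$ out of $\psi_k(\mathbf{j})$ yields (by symmetry of $\psi_k$) $\psi_{k-1}(\mathbf{j}\setminus\{I\})$, so averaging over $I$ gives the recursion $E(W_k'|\mathbf{W})=(1-k/n)W_k+(k/n)W_{k-1}$, with the convention $W_0=0$. Hence $\Lambda$ is lower bidiagonal with $\Lambda_{k,k}=k/n$ and $\Lambda_{k,k-1}=-k/n$, and in particular $\det\Lambda=d!/n^d\neq 0$ so $\Lambda$ is invertible (the non-degeneracy assumption $P(\psi_1(X_1)=0)<1$ is what makes the $W_1$ coordinate non-trivial and justifies using the full embedding from \cite{reinert2}). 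For the jump size, the bound $\|\psi\|_\infty\le b$ passes to each conditional expectation $\psi_k$, and since swapping $X_I$ affects only $\binom{n-1}{k-1}$ of the $\binom{n}{k}$ terms of $U_k$ we obtain $|W_k-W_k'|\le 2bk/\sqrt{n}$, whence
\[
\|\mathbf{W}-\mathbf{W}'\|_2^2\le \frac{4b^2}{n}\sum_{k=1}^d k^2=\frac{4b^2\gamma_d^2}{n},
\]
so one can take $K=2b\gamma_d/\sqrt{n}$.

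For $\nu_1$, use the lower bound in Remark \ref{rem-thm}. The nonzero entries of $\Lambda$ are the diagonal $k/n$ for $k=1,\ldots,d$ and the subdiagonal $-k/n$ for $k=2,\ldots,d$, so $\operatorname{trace}(\Lambda^t\Lambda)\le 2\gamma_d^2/n^2$ while $\det(\Lambda^t\Lambda)=(\det\Lambda)^2=(d!)^2/n^{2d}$, giving
\[
l^2=\frac{\det(\Lambda^t\Lambda)}{\operatorname{trace}(\Lambda^t\Lambda)^{d-1}}\ge \frac{(d!)^2/n^{2d}}{(2\gamma_d^2/n^2)^{d-1}}=\frac{(d!)^2}{2^{d-1}\gamma_d^{2(d-1)}\,n^2}=\frac{\kappa_d}{n^2},
\]
so $\nu_1\le 1/l\le n/\kappa_d^{1/2}$. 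Plugging $K^2=4b^2\gamma_d^2/n$ and this bound for $\nu_1$ into the coordinate inequality (\ref{con-coord}) applied to $i=d$ and $w_d=t$ yields exactly $\exp(-t^2\kappa_d^{1/2}/(8b^2\gamma_d^2))$, and the left-tail bound follows symmetrically. The main obstacle is really just the computation of $\Lambda$ in Step 2; everything afterwards is the spectral-norm bookkeeping already prepared by Remark \ref{rem-thm}.
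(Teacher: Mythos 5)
Your proposal is correct and follows essentially the same route as the paper: same exchangeable pair (uniform index swap with an independent copy), same $\Lambda$ matching (\ref{def-lambda-u}), same jump bound giving $K=2b\gamma_d/\sqrt{n}$, same trace/determinant estimates feeding Remark \ref{rem-thm}, and the same plug-in to (\ref{con-coord}). The one place you go beyond the paper is that you sketch a self-contained derivation of the recursion $E(W_k'\mid\mathbf{W})=(1-k/n)W_k+(k/n)W_{k-1}$ rather than citing \cite{reinert2} for $\Lambda$; the sketch is correct (after noting that each $(k-1)$-subset is hit $n-k+1$ times in the averaging over $I$, which combines with the binomial ratio $\binom{n}{k-1}/\binom{n}{k}$ to give the clean $k/n$ coefficient), but you should make that combinatorial count explicit rather than implicit in ``averaging over $I$.''
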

\begin{proof}
Let $X_1',X_2',\ldots,X_n'$ be $n$ independent copies of $X_1,\ldots,X_n$. Suppose $\mathbf{X}^i=(X_1,X_2,\ldots,X_i',X_{i+1},\ldots,X_n)$ that is we substitute the $i$th coordinate with an independent copy of $X_i$. Define
$$
\psi^i_k(\mathbf{j})=\psi_k(X^i_{j_1},X^i_{j_2},\ldots,X^i_{j_k}),
$$
that is $\psi_k$ applied on the sample with $i$-th coordinate exchanged.
Pick an index $I$ uniformly at random from $1,2,\ldots,n$ and consider the U statistics defined as
$$ U'_k=\sum_{|\mathbf{j}|=k}\psi_k^I(\mathbf{j})\quad\mbox{and}\quad W_k'=n^\frac{1}{2}{n \choose k}^{-1}U'_k.$$ It is clear that $(W_d,W_d')$ is an exchangeable pair, although they do not yield the univariate linearity condition. It has been shown in \cite{reinert2} that with $\mathbf{W}=(W_1,W_2,\ldots,W_d)$ and $\mathbf{W}'=(W_1',W_2',\ldots,W_d')$, the multivariate Stein condition (\ref{gen-exch-0}) holds with the lower triangular  matrix
\bea\label{def-lambda-u}\Lambda=\frac{1}{n}\left(\begin{array}{llcrr}
1& & & & \\
-2&2&&0&\\
  &-3&3&&\\
&0&&\ddots&\ddots\\
&&&-d&d
\end{array}\right) .\ena
Clearly $\psi_k(\mathbf{j})=\psi^I_k(\mathbf{j})$ if $I\notin \mathbf{j}$. Since $||\psi||_\infty\le b$, we therefore obtain,
\bea\label{bd-k-u}
|\psi_k(\mathbf{j})-\psi^I_k(\mathbf{j})|
\le 2b\,\mathbf{1}(I\in\mathbf{j}).\ena
Using (\ref{bd-k-u}) and $|\{\mathbf{j}:\mathbf{j}\ni I\}|={n-1\choose k-1}$, we have
\bea
|U_k-U_k'|\le\sum_{\mathbf{j}\ni I} |\psi_k(\mathbf{j})-\psi^I_k(\mathbf{j})|\le 2b {n-1\choose k-1}.
\ena
Hence we obtain
\beas
 |W_k-W_k'|=n^\frac{1}{2}{n\choose k}^{-1}|U_k-U_k'|\le n^\frac{1}{2}{n\choose k}^{-1}2b{n-1\choose k-1}=2bkn^{-\frac{1}{2}}.
\enas
The bound above readily yields
\bea\label{diff-bound-u}
||\mathbf{W-W}'||_2\le 2bn^{-\frac{1}{2}}\left(\frac{d(d+1)(2d+1)}{6}\right)^\frac{1}{2}=2bn^{-\frac{1}{2}}\gamma_d.
\ena
Using (\ref{diff-bound-u}) and (\ref{def-lambda-u}), we can apply Theorem \ref{thm-main1} with $K=2b\gamma_dn^{-\frac{1}{2}}$.
Next, we have to obtain lower bounds on the singular values of $\Lambda$ as in (\ref{def-lambda-u}) following Remark \ref{rem-thm}. It is easy to see
\bea\label{tr-est-u}
\mbox{trace}(\Lambda^t\Lambda)=\frac{1}{n^2}\left(1+2\sum_{k=2}^d k^2\right)=\frac{1}{n^2}\left(\frac{d(d+1)(2d+1)}{3}-1\right)< \frac{d(d+1)(2d+1)}{3n^2}.
\ena
Also,
\bea\label{det-est-u} \mbox{Det}(\Lambda^t\Lambda)=\mbox{Det}(\Lambda)^2=(d!)^2n^{-2d}.\ena
Suppose $0\le \sigma_1\le \sigma_2\le\ldots\le\sigma_d$ denote the $k$ singular values of $\Lambda$ in order. Using (\ref{bd-useful}), (\ref{tr-est-u}) and (\ref{det-est-u}) we obtain
\bea\label{bd-l1-u}
\sigma_1^2(\Lambda)\ge \frac{(d!)^2(3n^2)^{d-1}}{(d(d+1)(2d+1))^{d-1}n^{2d}}=\kappa_dn^{-2}.
\ena
Hence with $\nu_1=1/\sigma_1(\Lambda)$, we obtain $\nu_1\le \kappa_d^{-1/2}n.$ Thus, using Theorem \ref{thm-main1}, we obtain our result.
\end{proof}
\section{Doubly indexed permutation statistics}
Let $A=\{a_{i,j,k,l}:1\le i,j,k,l\le n\}$ be a collection of real numbers such that
$a_{i,j,k,l}=0$ whenever $i=j$ or $k=l$, $a_{i,j,k,l}=a_{i,j,l,k}=a_{j,i,l,k}$ and $\sum_{i\neq j,k\neq l} a_{i,j,k,l}=0$.
We consider the doubly indexed permutation statistic
$$
V_1=\sum_{1\le s\neq t\le n}a_{s,t,\pi(s),\pi(t)},
$$
where $\pi$ is a permutation chosen uniformly from $S_n$, the symmetric group of order $n$.
For notational simplicity, we will borrow the notation $a_{i,j,\pi(k),\pi(l)}=a^{\pi}_{i,j,k,l}$ from \cite{reinert}, so that
\bea
V_1=\sum_{1\le s\neq t\le n}a^{\pi}_{s,t,s,t}.\label{def-v0-dips}
\ena
 These statistics are natural in several nonparametric hypothesis testing problems in statistics. For example, the Mann-Whitney-Wilcoxon signed rank statistic \cite{mann} which tests for the equality of distributions of two sets of data or the multivariate graph correlation statistic due to Friedman and Rafsky \cite{jf1,jf2} which tests whether there is significant correlation present among two sets of multivariate vectors. In these cases one is typically interested in obtaining the $p$-values for $V_1$ under the null distribution.
 \par In \cite{reinert,zhao}, the authors obtained bounds for the error in normal approximation of $V_1$ using exchangeable pairs and Stein's method. We will be using the exchangeable pair obtained in \cite{reinert} to prove the following theorem.
\begin{theorem}\label{thm-dips}
Let $a_{i,j,k,l},\,\,1\le i,j,k,l\le n$ be a collection of real numbers so that $a_{i,j,k,l}=0$ if $i=j$ or $k=l$, $\sum_{i,j,k,l} a_{i,j,k,l}=0$ and $a_{i,j,k,l}=a_{i,j,l,k}=a_{j,i,l,k}$ for all $i,j,k,l$. If $\sup_{i,j,k,l} |a_{i,j,k,l}|\le b$, then with $V_1$ as in (\ref{def-v0-dips}), $W_1=n^{-3/2} V_1$ satisfies the following concentration inequality for all $t>0$,
\bea
P(W_1\le -t), P(W_1\ge t)&\le& \exp\left(-\frac{t^2}{2\phi_{b,n}}\right),
\ena
where $\phi_{b,n}=(8(2n-1)b^2(6+4/n+1/n^2))/n.$
\end{theorem}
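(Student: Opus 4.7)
The plan is to follow the general strategy of Theorem \ref{thm-main1}, but because the doubly indexed permutation statistic only satisfies (\ref{gen-exch}) with a nonzero remainder, I will have to carry an extra term $E(\mathbf{R}(\mathbf{W}) e^{\boldsymbol{\theta}^t \mathbf{W}})$ through the computation and show that this term contributes only a negligible correction to the MGF bound.

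First, I would adopt the exchangeable pair used in \cite{reinert}: let $\tau=(I,J)$ be a uniformly chosen transposition and set $\pi' = \pi\circ\tau$; this gives an exchangeable pair $(V_1,V_1')$, and more generally an exchangeable pair of vectors $(\mathbf{W},\mathbf{W}')$ built from $V_1$ together with the auxiliary doubly indexed statistics needed to close the linearity relation. Using the explicit identities of \cite{reinert} one obtains (\ref{gen-exch}) with a known $\Lambda$ (of order $1/n$) and a remainder $\mathbf{R}(\mathbf{W})$ whose magnitude can be bounded deterministically from $|a_{i,j,k,l}|\le b$. The bound $\|\mathbf{W}-\mathbf{W}'\|_2\le K$ is obtained by noting that a single transposition only alters those summands of $V_\bullet$ indexed by $s$ or $t\in\{I,J\}$; each of the $O(n)$ affected terms has magnitude at most $2b$, and after dividing by $n^{3/2}$ this yields $K$ of order $b/\sqrt{n}$, which after the full bookkeeping produces the factor $8(2n-1)b^2(6+4/n+1/n^2)$ in $\phi_{b,n}$.

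Next, I would rerun the calculation leading to (\ref{ineq-R}), but with the remainder present: the identity becomes
\beas
\Lambda E(\mathbf{W} e^{\boldsymbol{\theta}^t\mathbf{W}}) = \tfrac{1}{2}E\!\left((\mathbf{W}-\mathbf{W}')(e^{\boldsymbol{\theta}^t\mathbf{W}} - e^{\boldsymbol{\theta}^t\mathbf{W}'})\right) + E(\mathbf{R}(\mathbf{W}) e^{\boldsymbol{\theta}^t\mathbf{W}}).
\enas
Premultiplying by $\Lambda^{-1}$ and using (\ref{exp-diff}) together with $\|\mathbf{W}-\mathbf{W}'\|_2\le K$ controls the first right-hand term by $\tfrac{1}{2}K^2\nu_1\|\boldsymbol{\theta}\|_2 \, m(\boldsymbol{\theta})$ exactly as in Theorem \ref{thm-main1}. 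The second right-hand term is the additional piece that is absent in the proof of Theorem \ref{thm-main1}, and handling it is the heart of the argument: I would show that $\|\Lambda^{-1}\mathbf{R}(\mathbf{W})\|_2$ is bounded by a deterministic constant of the same order as $K^2\|\boldsymbol{\theta}\|_2\nu_1$, so that the resulting inequality $\|\nabla\log m(\boldsymbol{\theta})\|_2 \le C\|\boldsymbol{\theta}\|_2$ still holds, now with the enlarged constant reflected in $\phi_{b,n}$. Finally, applying the Chernoff bound $P(W_1\ge t)\le e^{-\theta t} m(\theta\, e_1)$, optimizing in $\theta$, and repeating with $-\theta$ for the left tail will yield the stated concentration inequality.

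The main obstacle is controlling the remainder term cleanly: one has to verify, using the symmetry assumptions $a_{i,j,k,l}=a_{i,j,l,k}=a_{j,i,l,k}$ and the vanishing-sum condition, that $\mathbf{R}(\mathbf{W})$ expressed through the transposition statistics does not grow with $\mathbf{W}$ but only contributes a bounded deterministic shift of the correct order in $n$ and $b$. Once this bound is in place, the rest of the argument proceeds in parallel with the proof of Theorem \ref{thm-main1}, and the arithmetic collapses the constants into the explicit expression $\phi_{b,n}=(8(2n-1)b^2(6+4/n+1/n^2))/n$ appearing in the statement.
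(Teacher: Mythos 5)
The central step in your sketch is to treat $\mathbf{R}(\mathbf{W})$ as a \emph{bounded deterministic} correction of the right order in $n$ and $b$, and to absorb $E(\mathbf{R}e^{\boldsymbol{\theta}^t\mathbf{W}})$ into the gradient bound exactly as one absorbs the $(\mathbf{W}-\mathbf{W}')$ term. This cannot work, and it is the genuine gap in your proposal. For the doubly indexed permutation statistic the remainder is
\beas
\mathbf{R}(\mathbf{W}) = -\frac{2}{n(n-1)}\bigl(W_1,0,0\bigr)^t,
\enas
which is a random quantity that \emph{grows linearly in} $W_1$; there is no deterministic bound on $\|\Lambda^{-1}\mathbf{R}\|_2$ other than $O(bn^{-1/2})$ coming from $|W_1|\le bn^{1/2}$, and that gives a term in the gradient bound that does not vanish as $\|\boldsymbol{\theta}\|_2 \to 0$. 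If one carries such a constant term through the Chernoff optimization, the resulting exponent is not $\propto t^2$ for small $t$, and the advertised bound for \emph{all} $t>0$ fails. No amount of bookkeeping with the symmetry relations $a_{i,j,k,l}=a_{i,j,l,k}=a_{j,i,l,k}$ and the vanishing-sum condition will turn $\mathbf{R}$ into a deterministic shift; those conditions merely make the remainder take the clean form above.

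The idea you are missing is that, precisely because the remainder is proportional to $W_1$ and sits only in the first coordinate, the term $E([\Lambda^{-1}\mathbf{R}]_1 e^{\boldsymbol{\theta}^t\mathbf{W}})$ is a scalar multiple of the very quantity you are trying to control: with $\Lambda^{-1}_{1,1}=\tfrac{n(n-1)}{2(2n-1)}$ one gets $[\Lambda^{-1}\mathbf{R}]_1 = -\tfrac{W_1}{2n-1}$, so after premultiplying by $\Lambda^{-1}$ and reading off the first coordinate the identity becomes
\beas
\frac{2n}{2n-1}\,E(W_1 e^{\boldsymbol{\theta}^t\mathbf{W}}) = \frac{1}{2}E\!\left([\Lambda^{-1}(\mathbf{W}-\mathbf{W}')]_1\bigl(e^{\boldsymbol{\theta}^t\mathbf{W}} - e^{\boldsymbol{\theta}^t\mathbf{W}'}\bigr)\right),
\enas
i.e.\ the remainder is moved to the left, where it harmlessly rescales the coefficient by $\tfrac{2n}{2n-1}$. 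This is what the paper does. Relatedly, the paper then specializes to $\boldsymbol{\theta}=(\theta_1,0,0)^t$ and bounds the \emph{univariate} derivative $|m_1'(\theta_1)|\le\phi_{b,n}|\theta_1|m_1(\theta_1)$, using $m_1'(0)=0$ and convexity of $m_1$ to integrate the differential inequality, rather than running the full $\|\nabla\log m\|_2$ mean-value-theorem argument of Theorem \ref{thm-main1}; your plan to control $\|\nabla\log m(\boldsymbol{\theta})\|_2$ uniformly is unnecessary and would require more from the remaining coordinates than is actually available. Your construction of the exchangeable pair and the bound $\|\mathbf{W}-\mathbf{W}'\|_2\le K$ are fine and match the paper; the absorbing trick for the remainder is the essential missing ingredient.
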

\begin{proof}
We will first construct an exchangeable pair $(V_1,V_1')$ and equivalently $(W_1,W_1')$ where $W_1'=n^{-3/2}V_1'$ and then construct the pair $(\mathbf{W},\mathbf{W}')$ satisfying (\ref{gen-exch}). Suppose $\tau_{i,j}$ denotes the transposition of $i,j$ that is
$$
\tau_{i,j}(k)=k\,\,\mbox{for all $k\neq i,j$}\quad\mbox{and}\quad\tau_{i,j}(i)=j,\,\tau_{i,j}(j)=i.
$$
To construct the exchangeable pair, we select two distinct indices $I,J$ uniformly from $\{1,2,\ldots,n\}$.
Letting $\pi'=\pi\tau_{I,J}$, we denote
$$
V_1'=\sum_{s,t=1}^n a^{\pi'}_{s,t,s,t}.
$$
Let $\mathbf{V}=(V_1,V_2,V_3)$ and $\mathbf{V}'=(V_1',V_2',V_3')$, where
\beas
&&V_i=\sum_{s=1}^n a^{(i)}_{s,\pi(s)}\quad\mbox{and}\quad V_i'=\sum_{s=1}^n a^{(i)}_{s,\pi'(s)}\quad\mbox{for $i=2,3$, where}\\
&&a^{(2)}_{s,t}=\frac{1}{n}\sum_{i,j} a_{s,i,t,j}\quad\mbox{and}\quad a^{(3)}_{s,t}=\frac{1}{n}\sum_{i,j}a_{i,s,j,t}=a_{s,t}^{(2)}.
\enas
The last equality above implies $V_2=V_3$ and $V_2'=V_3'$.
It has been shown in \cite{reinert} that the tuple $(\mathbf{V,V}')$ satisfies
\bea\label{dips-v}
E(\mathbf{V}'|\mathbf{V})=(I_3-\Lambda)\mathbf{V}+\mathbf{R}',
\ena
where $\mathbf{R}'=(R_1,0,0)$, with
\bea
&& R_1=-\frac{2}{n(n-1)}\sum_{i\neq j} a^\pi_{i,j,j,i}=-\frac{2}{n(n-1)}\sum_{i\neq j} a^\pi_{i,j,i,j}=-\frac{2}{n(n-1)}V_1,\label{expr-R}
\ena
and
\bea
&&\Lambda=\frac{2}{n-1}\left(\begin{array}{lcr}
\frac{2n-1}{n} & -1 & -1\\
0 & 1 & 0\\
0 & 0 & 1
\end{array}
\right).\label{def-lambda-dips}
\ena
 Using (\ref{dips-v}), we obtain $(\mathbf{W},\mathbf{W}')=n^{-3/2}(\mathbf{V,V}')$ satisfies
 \bea
 E(\mathbf{W}'|\mathbf{W})=(I_3-\Lambda)\mathbf{W}+\mathbf{R},\label{exch-dips}
 \ena
 where $\Lambda$ is as in (\ref{def-lambda-dips}) and $\mathbf{R}=n^{-3/2}\mathbf{R}'$.

Next we bound $||\mathbf{W-W}'||_2$ and $\nu_1=\sigma_1^{-1}(\Lambda)$. First we bound $||\mathbf{W-W}'||_2$. It is easy to verify that
\bea
V_1'-V_1&=&-\sum_{s=1}^n (a^\pi_{I,s,I,s}+a^\pi_{J,s,J,s}+a^\pi_{s,I,s,I}+a^\pi_{s,J,s,J})\nn\\
&&+(a^\pi_{I,I,I,I}+a^\pi_{I,J,I,J}+a^\pi_{J,J,J,J}+a^\pi_{J,I,J,I})\nn\\
&&+\sum_{s=1}^n (a^\pi_{I,s,J,s}+a^\pi_{J,s,I,s}+a^\pi_{s,I,s,J}+a^\pi_{s,J,s,I})\nn\\
&&-(a^\pi_{I,I,J,J}+a^\pi_{I,J,J,I}+a^\pi_{J,I,I,J}+a^\pi_{J,J,I,I})\nn\\
&=& -\sum_{s=1}^n (a^\pi_{I,s,I,s}+a^\pi_{J,s,J,s}+a^\pi_{s,I,s,I}+a^\pi_{s,J,s,J})\nn\\
&&+\sum_{s=1}^n (a^\pi_{I,s,J,s}+a^\pi_{J,s,I,s}+a^\pi_{s,I,s,J}+a^\pi_{s,J,s,I})\nn\\
&&+(a^\pi_{I,J,I,J}+a^\pi_{J,I,J,I})-(a^\pi_{I,J,J,I}+a^\pi_{J,I,I,J})\label{diff-v0-dips},
\ena
and also
\bea
\label{diff-vi-dips}V_3'-V_3=V_2'-V_2= -a^{(2)}_{I,\pi(I)}-a^{(2)}_{J,\pi(J)}+a^{(2)}_{I,\pi(J)}+a^{(2)}_{J,\pi(I)}.
\ena
The equalities in (\ref{diff-v0-dips}), (\ref{diff-vi-dips}) along with the facts that $|a_{i,j,k,l}|\le b$ and $|a^{(2)}_{s,t}|\le bn$, for all $1\le s,t\le n$ give
\beas
|V_1'-V_1|\le 8bn+4b\quad\mbox{and}\quad |V_i'-V_i|\le 4bn\quad\mbox{for $i=2,3$}.
\enas
Thus we obtain
$$
||\mathbf{V}-\mathbf{V}'||_2\le ((8bn+4b)^2+32b^2n^2)^\frac{1}{2}=4b(6n^2+4n+1)^\frac{1}{2}.
$$
Since $(\mathbf{W,W}')=n^{-3/2}(\mathbf{V,V}')$, we obtain
\bea\label{def-K-u}
||\mathbf{W-W}'||_2\le 4bn^{-3/2}(6n^2+4n+1)^\frac{1}{2}=4bn^{-1/2}(6+4/n+1/n^2)^{1/2}:=\eta_{b,n},\,\mbox{say.}
\ena

Next, we need to bound $\nu_1$. As in Remark \ref{rem-thm}, we first obtain $\mbox{det}(\Lambda^t\Lambda)$ and $\mbox{trace}(\Lambda^t\Lambda)$.
\beas
\mbox{det}(\Lambda^t\Lambda)=\mbox{det}^2(\Lambda)=\left(\frac{8(2n-1)}{(n-1)^3n}\right)^2\quad\mbox{and}\quad\mbox{trace}(\Lambda^t\Lambda)=\left(\frac{2}{n-1}\right)^2\left(\frac{(2n-1)^2}{n^2}+4\right)< \frac{32}{(n-1)^2}.
\enas
Using Remark \ref{rem-thm}, we obtain
\beas
\sigma_1^2(\Lambda)\ge \frac{\mbox{det}(\Lambda^t\Lambda)}{\mbox{trace}^2(\Lambda^t\Lambda)}\ge \left(\frac{2n-1}{4n(n-1)}\right)^2.
\enas
Hence, with $\nu_1=\sigma_1^{-1}(\Lambda)$ as in Theorem \ref{thm-main1}, we obtain
\bea\label{bd-nu}
\nu_1\le \frac{4n(n-1)}{2n-1}< 2n.
\ena
As in the proof of Theorem \ref{thm-main1}, we consider $m(\boldsymbol{\theta})=E(e^{\boldsymbol{\theta}^t\mathbf{W}})$ for $\boldsymbol{\theta}\in\mathbb{R}^3$.
The gradient vector is given by
$$\nabla m(\boldsymbol{\theta})=\left(\frac{\partial (m(\boldsymbol{\theta}))}{\partial \boldsymbol{\theta}_i}\right)_{i=1}^k=E(\mathbf{W}e^{\boldsymbol{\theta}^t\mathbf{W}}).$$
Using (\ref{exch-dips}), we obtain,
\bea
\nabla m(\boldsymbol{\theta})= E(\mathbf{W}e^{\boldsymbol{\theta}^t\mathbf{W}})&=& E((\mathbf{W-W'})e^{\boldsymbol{\theta}^t\mathbf{W}})+E(\mathbf{W'}e^{\boldsymbol{\theta}^t \mathbf{W}})\nonumber\\
&=&E((\mathbf{W-W'})e^{\boldsymbol{\theta}^t\mathbf{W}})+E(E(\mathbf{W'|W})e^{\boldsymbol{\theta}^t \mathbf{W}})\nonumber\\
&=& E((\mathbf{W-W'})e^{\boldsymbol{\theta}^t\mathbf{W}})+(I_3-\Lambda)E(\mathbf{W}e^{\boldsymbol{\theta}^t \mathbf{W}})+E(\mathbf{R}e^{\boldsymbol{\theta}^t \mathbf{W}})\nonumber.
\ena
Changing sides we obtain
\bea\label{exch}
\Lambda E(\mathbf{W}e^{\boldsymbol{\theta}^t\mathbf{W}})&=&E((\mathbf{W-W'})e^{\boldsymbol{\theta}^t\mathbf{W}})+E(\mathbf{R}e^{\boldsymbol{\theta}^t \mathbf{W}}).
\ena
Since $(\mathbf{W,W'})$ is exchangeable, we have
$$
E((\mathbf{W-W'})e^{\boldsymbol{\theta}^t\mathbf{W}})=E((\mathbf{W'-W})e^{\boldsymbol{\theta}^t\mathbf{W'}})=-E((\mathbf{W-W'})e^{\boldsymbol{\theta}^t\mathbf{W}'}),
$$
implying
\bea\label{intro-W'}
E((\mathbf{W-W'})e^{\boldsymbol{\theta}^t\mathbf{W}})=\frac{1}{2}E((\mathbf{W-W'})(e^{\boldsymbol{\theta}^t\mathbf{W}}-e^{\boldsymbol{\theta}^t\mathbf{W}'})).
\ena
Using (\ref{exch}) and (\ref{intro-W'}), we obtain
\bea
\Lambda E(\mathbf{W}e^{\boldsymbol{\theta}^t\mathbf{W}})&=&\frac{1}{2}E((\mathbf{W-W'})(e^{\boldsymbol{\theta}^t\mathbf{W}}-e^{\boldsymbol{\theta}^t\mathbf{W}'}))+E(\mathbf{R}e^{\boldsymbol{\theta}^t \mathbf{W}}).
\ena
Premultiplying both sides by $\Lambda^{-1}$, we have
\bea
E(\mathbf{W}e^{\boldsymbol{\theta}^t\mathbf{W}})=\frac{1}{2}E(\Lambda^{-1}(\mathbf{W-W'})(e^{\boldsymbol{\theta}^t\mathbf{W}}-e^{\boldsymbol{\theta}^t\mathbf{W}'}))+E(\Lambda^{-1}\mathbf{R}e^{\boldsymbol{\theta}^t \mathbf{W}}).\label{growth-dips-1}
\ena
Equating the first coordinates of the vectors on the two sides of (\ref{growth-dips-1}), we obtain
\bea\label{ineq-r-dips}
E(W_1e^{\boldsymbol{\theta}^t\mathbf{W}})=\frac{1}{2}E([\Lambda^{-1}(\mathbf{W-W'})]_1(e^{\boldsymbol{\theta}^t\mathbf{W}}-e^{\boldsymbol{\theta}^t\mathbf{W}'}))+E([\Lambda^{-1}\mathbf{R}]_1e^{\boldsymbol{\theta}^t \mathbf{W}}),
\ena
where for a vector $\mathbf{X}$, $[\mathbf{X}]_1:=X_1$ or the first coordinate.
Since,
 $$\mathbf{R}=-\frac{2}{n(n-1)}(W_1,0,0)^t\quad\mbox{and}\quad\Lambda^{-1}_{1,1}=\frac{n(n-1)}{2(2n-1)},$$
we obtain,
\beas
[\Lambda^{-1}\mathbf{R}]_1=-\Lambda^{-1}_{1,1}\times \frac{2W_1}{n(n-1)}=-\frac{W_1}{2n-1}.
\enas
Thus, (\ref{ineq-r-dips}) now yields,
\beas
E(W_1e^{\boldsymbol{\theta}^t\mathbf{W}})=\frac{1}{2}E([\Lambda^{-1}(\mathbf{W-W'})]_1(e^{\boldsymbol{\theta}^t\mathbf{W}}-e^{\boldsymbol{\theta}^t\mathbf{W}'}))-\frac{1}{2n-1}E(W_1e^{\boldsymbol{\theta}^t\mathbf{W}}).
\enas
Changing sides, we obtain
\bea
\frac{2n}{2n-1}E(W_1e^{\boldsymbol{\theta}^t\mathbf{W}})=\frac{1}{2}E\left([\Lambda^{-1}(\mathbf{W-W'})]_1(e^{\boldsymbol{\theta}^t\mathbf{W}}-e^{\boldsymbol{\theta}^t\mathbf{W}'})\right).\label{eqn-prem}
\ena
As before, note that $||\Lambda^{-1}||_2=\nu_1$. Taking absolute values on both sides of (\ref{eqn-prem}) and using (\ref{def-K-u}) and Jensen's inequality, we obtain
\beas
|E(W_1e^{\boldsymbol{\theta}^t\mathbf{W}})|&=&\frac{2n-1}{4n}\left|E\left([\Lambda^{-1}(\mathbf{W-W'})]_1(e^{\boldsymbol{\theta}^t\mathbf{W}}-e^{\boldsymbol{\theta}^t\mathbf{W}'})\right)\right|\\
&\le & \frac{2n-1}{4n}E\left(||\Lambda^{-1}(\mathbf{W-W'})||_2\left|e^{\boldsymbol{\theta}^t\mathbf{W}}-e^{\boldsymbol{\theta}^t\mathbf{W}'}\right|\right)\\
&\le & \frac{2n-1}{4n}E\left(||\Lambda^{-1}||_2||\mathbf{W-W'}||_2\left|e^{\boldsymbol{\theta}^t\mathbf{W}}-e^{\boldsymbol{\theta}^t\mathbf{W}'}\right|\right)\\
&\le&\frac{(2n-1)\eta_{b,n}\nu_1}{4n}E\left|e^{\boldsymbol{\theta}^t\mathbf{W}}-e^{\boldsymbol{\theta}^t\mathbf{W}'}\right|.
\enas
Taking $\boldsymbol{\theta}=({\theta}_1,0,0)^t$ and using (\ref{exp-diff}), we obtain
\bea
|E(W_1e^{{\theta}_1W_1})|&\le& \frac{(2n-1)\eta_{b,n}\nu_1}{4n}E\left|e^{{\theta}_1W_1}-e^{{\theta}_1W_1'}\right|\le \frac{(2n-1)\eta_{b,n}|W_1-W_1'||\theta_1|\nu_1}{4n}\frac{E(e^{{\theta}_1W_1})+E(e^{{\theta}_1W_1'})}{2}\nonumber\\
&\le &\frac{(2n-1)\eta^2_{b,n}|{\theta}_1|\nu_1}{4n}\frac{E(e^{{\theta}_1W_1})+E(e^{{\theta}_1W_1'})}{2}=\frac{(2n-1)\eta^2_{b,n}|{\theta}_1|\nu_1}{4n}E(e^{{\theta}_1W_1}).\label{jensen}
\ena
Using (\ref{jensen}) and (\ref{def-K-u}), we obtain
\beas
|E(W_1e^{{\theta}_1W_1})|\le \frac{4(2n-1)b^2(6+4/n+1/n^2)|{\theta}_1|\nu_1}{n^2}E(e^{{\theta}_1W_1}).
\enas
The bound from (\ref{bd-nu}) yields
\beas
|E(W_1e^{{\theta}_1W_1})|\le \frac{8(2n-1)b^2(6+4/n+1/n^2)|{\theta}_1|}{n}E(e^{{\theta}_1W_1}).
\enas
Hence, with $m_1({\theta}_1)=E(e^{{\theta}_1W_1})$, we obtain
\bea\label{ineq-growth}
|m_1'({\theta}_1)|=|E(W_1e^{\theta_1 W_1})|\le \frac{8(2n-1)b^2(6+4/n+1/n^2)|{\theta}_1|}{n}m_1({\theta}_1)=\phi_{b,n}|{\theta}_1|m_1({\theta}_1).
\ena
It is easy to see that
$$
E(V_1)=\frac{1}{n(n-1)}\sum_{s\neq t, u\neq v} a_{s,t,u,v}=0,
$$
implying $m_1'(0)=E(W_1)=0$ as well. Since $m_1({\theta}_1)$ is a convex function, we therefore have
$m_1'({\theta}_1)\ge0$ for ${\theta}_1\ge 0$ and $m_1'({\theta}_1)\le0$, for ${\theta}_1\le 0$.

Using (\ref{ineq-growth}), we therefore have for ${\theta}_1\ge 0$,
\beas
m_1'({\theta}_1)\le \phi_{b,n}{\theta}_1m_1({\theta}_1),
\enas
which on integration, yields
\beas
\log(m_1({\theta}_1))\le \frac{\phi_{b,n}{\theta}_1^2}{2}\quad\mbox{for ${\theta}_1\ge 0$}.
\enas
Similar argument holds for ${\theta}_1<0$ as well, yielding
\beas
m_1({\theta}_1)\le \exp\left(\frac{\phi_{b,n}{\theta}_1^2}{2}\right)\quad\mbox{for all ${\theta}_1$}.
\enas
Using Markov's inequality, we have
\beas
P(W_1\ge t)\le e^{-\theta_1 t}m_1(\theta_1)\le \exp\left(-\theta_1 t+\frac{\phi_{b,n}{\theta}_1^2}{2}\right)\quad\mbox{for all $\theta_1\ge 0$}.
\enas
Using $\theta_1=t/\phi_{b,n}$, we obtain
\beas
P(W_1\ge t)\le \exp\left(-\frac{t^2}{2\phi_{b,n}}\right).
\enas
The bound for $P(W_1\le -t)$ is similar.
\end{proof}
Next we discuss two applications of Theorem \ref{thm-dips} to distribution free hypothesis testing. The first one is Mann-Whitney-Wilcoxon signed rank statistic, while the second one is the generalised multivariate correlation measure due to Friedman and Rafsky.
\subsection{Applications to Mann-Whitney-Wilcoxon statistic}
 Let $x_1,x_2,\ldots,x_{n_{\tiny 1}}$ and $y_1,y_2,\ldots, y_{n_2}$, $n_1+n_2=n$ be independent univariate samples from  unknown continuous distributions $F_X$ and $F_Y$ respectively. One is interested in testing the hypothesis $$H_0:F_X=F_Y\quad\mbox{vs.}\quad H_1:F_X\neq F_Y.$$
The MWW test statistic is defined as
\bea\label{def-mww}
V_{MWW}=|\{(i,j): x_i<y_j\}|.
\ena
We reject $H_0$ if $V_{MWW}$ is too large or too small, see \cite{mann}. The rate of convergence to normality for $V_{MWW}$ was considered in \cite{zhao} and \cite{reinert}. Let $\mathbf{z}=(x_1,x_2,\ldots,x_{n_1},y_1,y_2,\ldots,y_{n_2})$ and $\pi(i)$ denote the rank of $z_i$. Under $H_0$, $\pi$ is clearly a uniform random permutation. For $1\le i,j,k,l\le n$, define
\bea\label{a-mww}
a_{i,j,k,l}=\left\{\begin{array}{ll}
+\frac{1}{2}&\mbox{if $1\le i\le n_1$, $n_1+1\le j\le n$ and $1\le k<l\le n$}\\
-\frac{1}{2}&\mbox{if $1\le i\le n_1$, $n_1+1\le j\le n$ and $1\le l<k\le n$}\\
0 &\mbox{otherwise.}
\end{array}\right.
\ena
Since
\beas
V_1=\sum_{s\neq t} a^\pi_{s,t,s,t}&=&\sum_{1\le s\le n_1, n_1+1\le t\le n} \frac{1}{2}(\mathbf{1}(x_s< y_{t-n_1})-\mathbf{1}(x_s> y_{t-n_1}))\\
&=& \frac{1}{2}V_{MWW}-\frac{1}{2}(n_1n_2-V_{MWW})\\
&=& V_{MWW}-\frac{n_1n_2}{2},
\enas
 and $\sum_{i,j,k,l} a_{i,j,k,l}=0$, we obtain that $V_1$ is $V_{MWW}$ mean centered and hence instead of evaluating the $p$ values of $V_{MWW}$ under $H_0$, we might as well obtain the same for $V_1$. Since $a_{i,j,k,l}$ in (\ref{a-mww}) satisfies the hypothesis of Theorem \ref{thm-dips}, we can apply Theorem \ref{thm-dips}, to bound the $p$ values of $V_1$. In particular, using $b=1/2$ in Theorem \ref{thm-dips}, we obtain the following proposition.
\begin{proposition}
Let $x_1,x_2,\ldots,x_{n_{\tiny 1}}$ and $y_1,y_2,\ldots, y_{n_2}$, $n_1+n_2=n$ be independent univariate samples from  unknown continuous distributions $F_X$ and $F_Y$. Let $a_{i,j,k,l}$ be defined as in (\ref{a-mww}). If $\pi$ is a permutation chosen uniformly at random and
$$
V_1=\sum_{s\neq t} a^\pi_{s,t,s,t}.
$$
Then $W_1=n^{-3/2}V_1$ satisfies the following inequality for all $t>0$
\beas
P(W_1\ge t), P(W_1\le -t)\le \exp\left(-\frac{t^2n}{4(2n-1)(6+4/n+1/n^2)}\right),
\enas
\end{proposition}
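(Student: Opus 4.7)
The plan is to reduce the proposition to a direct application of Theorem \ref{thm-dips} by verifying its hypotheses for the array $a_{i,j,k,l}$ of (\ref{a-mww}) with $b=1/2$, and then performing the arithmetic substitution into $\phi_{b,n}$.

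For the hypothesis check, each structural condition is read off from the piecewise definition. The vanishing $a_{i,j,k,l}=0$ on $\{i=j\}\cup\{k=l\}$ is immediate since the nonzero branches force $i\le n_1<j$ and a strict ordering between $k$ and $l$. The boundedness $\sup_{i,j,k,l}|a_{i,j,k,l}|\le 1/2$ identifies the constant $b=1/2$. For the zero-sum condition, for each fixed pair $(i,j)$ with $i\le n_1<j$ the contributions $+1/2$ from $\{k<l\}$ and $-1/2$ from $\{l<k\}$ are equinumerous and cancel, while all other $(i,j)$ give zero; hence $\sum_{i,j,k,l}a_{i,j,k,l}=0$. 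The only slightly delicate point is the symmetry requirement $a_{i,j,k,l}=a_{i,j,l,k}=a_{j,i,l,k}$; I would handle this by passing to the symmetrization of $a$ under the group generated by the involutions $(i,j,k,l)\mapsto(i,j,l,k)$ and $(i,j,k,l)\mapsto(j,i,l,k)$, and verifying that averaging over this group does not alter $V_1=\sum_{s\ne t} a^\pi_{s,t,s,t}$, so that the centering identity $V_1=V_{MWW}-n_1n_2/2$ established in the excerpt carries over to the symmetrized array.

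Once the hypotheses are verified, Theorem \ref{thm-dips} applied with $b=1/2$ yields the tail bound in the form $\exp(-t^2/(2\phi_{1/2,n}))$. Substituting $b=1/2$ into $\phi_{b,n}=8(2n-1)b^2(6+4/n+1/n^2)/n$ gives
$$\phi_{1/2,n}=\frac{2(2n-1)(6+4/n+1/n^2)}{n},$$
so that $2\phi_{1/2,n}=4(2n-1)(6+4/n+1/n^2)/n$ and the exponent becomes
$$-\frac{t^2}{2\phi_{1/2,n}}=-\frac{t^2 n}{4(2n-1)(6+4/n+1/n^2)},$$
which is precisely the bound claimed in the proposition.

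I do not expect any real obstacle. The substantive work has already been done by Theorem \ref{thm-dips} and by the centering computation in the excerpt; the proof is essentially a specialization. The one step that requires any thought is the symmetry check, which is resolved by the routine symmetrization described above, and the remaining task is the elementary arithmetic that reshapes $2\phi_{1/2,n}$ into the quoted denominator.
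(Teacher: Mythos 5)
Your plan -- apply Theorem \ref{thm-dips} with $b=1/2$ and simplify $\phi_{b,n}$ -- is exactly the paper's approach, and the arithmetic producing the stated exponent is correct, as are the checks of the vanishing on $\{i=j\}\cup\{k=l\}$, the zero-sum property, and the bound $b=1/2$. The gap is in the symmetry step. The array in (\ref{a-mww}) is antisymmetric in its last two indices on its nonzero branch, $a_{i,j,k,l}=-a_{i,j,l,k}$ whenever $i\le n_1<j$ and $k\ne l$, and it is supported only on ordered pairs with $i\le n_1<j$, so $a_{j,i,\cdot,\cdot}$ vanishes wherever $a_{i,j,\cdot,\cdot}$ does not. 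Consequently the Klein-four symmetrization you propose yields $\tilde a\equiv 0$, and the symmetrized statistic does not reproduce $V_1$: relabelling $s\leftrightarrow t$ shows $\sum_{s\ne t}\tilde a^\pi_{s,t,s,t}=\tfrac12\bigl(V_1+\sum_{s\ne t}a_{s,t,\pi(t),\pi(s)}\bigr)$, and for (\ref{a-mww}) the second sum equals $-V_1$, so the total is identically zero rather than $V_{MWW}-n_1n_2/2$. In fact no array satisfying $a_{i,j,k,l}=a_{i,j,l,k}$ can encode this statistic, since $\sum_{s\ne t}a^\pi_{s,t,s,t}$ would then be invariant under reversing the relative order of $\pi(s)$ and $\pi(t)$, whereas $V_{MWW}-n_1n_2/2$ changes sign.

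What actually needs to be tracked is where the last-index symmetry is used in the proof of Theorem \ref{thm-dips}: only in (\ref{expr-R}), to rewrite $R_1=-\frac{2}{n(n-1)}\sum_{i\ne j}a^\pi_{i,j,j,i}$ as $-\frac{2}{n(n-1)}V_1$. For (\ref{a-mww}) one computes $\sum_{i\ne j}a^\pi_{i,j,j,i}=-V_1$ instead, hence $R_1=+\frac{2}{n(n-1)}V_1$, and carrying this sign through (\ref{ineq-r-dips})--(\ref{eqn-prem}) replaces the prefactor $\frac{2n}{2n-1}$ by $\frac{2(n-1)}{2n-1}$, which slightly worsens but does not destroy the constant. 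Be aware that the paper itself merely asserts that (\ref{a-mww}) satisfies the hypotheses of Theorem \ref{thm-dips}, which as written it does not; your proposal inherits that unverified step, and the symmetrization you describe does not close it.
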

\subsection{Random intersection of interpoint distance based graphs}
In \cite{jf1} and \cite{jf2}, notion of association measures like Kendall's $\tau$ were extended to multivariate observations using interpoint distance based graphs. Let $(X_1,Y_1),(X_2,Y_2),\ldots,(X_n,Y_n)$ be $n$ i.i.d vector tuples. We are interested in examining the strength of association between $X$ and $Y$. This is achieved by constructing $k$ minimal spanning trees or $k$ nearest neighbour spanning subgraphs $G_1$ and $G_2$ out of the $X$ and $Y$ datapoints respectively. If $E_i$ denotes the edge set of $G_i$ for $i=1,2$, then the statistic of interest is
$$
\Gamma_1=\sum_{1\le i,j\le n} \mathbf{1}((i,j)\in E_1)\mathbf{1}(i,j\in E_2).
$$
Clearly, a large value of $\Gamma_1$ indicates presence of significant association between $X$ and $Y$.  For notational simplicity, let $a_{i,j,k,l}=c_{i,j}d_{k,l}$, where $c_{i,j}=\mathbf{1}((i,j)\in E_1)$ and $d_{k,l}=\mathbf{1}((k,l)\in E_2)$. We need to compare the observed value of $\Gamma_1$ with the baseline $p$ value of $V_1$ where
\bea\label{def-g0}
V_1=\sum_{s\neq t} a^\pi_{s,t,s,t}=\sum_{s\neq t} \mathbf{1}((s,t)\in G_1)\mathbf{1}((\pi(s),\pi(t))\in G_2),
\ena where $\pi$ is a permutation chosen uniformly at random from $S_n$. Clearly
\beas
\mu=E(V_1)=\frac{1}{n(n-1)}\sum_{i\neq j,k\neq l} a_{i,j,k,l}=\frac{4|E_1||E_2|}{n(n-1)}.
\enas
 Hence, if we consider
$$
\widehat{a}_{i,j,k,l}=\left\{\begin{array}{cl}
a_{i,j,k,l}-\frac{4|E_1||E_2|}{n^2(n-1)^2} & \mbox{if $i\neq j$ and $k \neq l$}\\
0 & \mbox{otherwise}.
\end{array}\right.
$$
then $\sum_{i,j,k,l}\widehat{a}_{i,j,k,l}=0$ and the array $\widehat{a}_{i,j,k,l}\,\,1\le i,j,k,l\le n$ satisfies the conditions in Theorem \ref{thm-dips}. Since $|E_1|,|E_2|\le n(n-1)/2$, the number of edges in the complete graph on $n$ vertices, we obtain
\beas
|\widehat{a}_{i,j,k,l}|\le a_{i,j,k,l}+\frac{4|E_1||E_2|}{n^2(n-1)^2}\le2.
\enas
Hence applying Theorem \ref{thm-dips} with $b=2$, we obtain the following proposition.
\begin{proposition}
Let $G_1=(V_1,E_1)$, $G_2=(V_2,E_2)$ be two interpoint distance based graphs derived from $n$ data points $X_1,X_2,\ldots,X_n$ and $Y_1,Y_2,\ldots, Y_n$ respectively. Let $\pi$ be a permutation chosen uniformly at random from $S_n$. Then $V_1$, as defined in (\ref{def-g0}) satisfies the following concentration inequality
\beas
P\left(n^{-3/2}\left(V_1-4\frac{|E_1||E_2|}{n(n-1)}\right)\ge t\right),P\left(n^{-3/2}\left(V_1-4\frac{|E_1||E_2|}{n(n-1)}\right)\le -t\right)\le \exp\left(-\frac{nt^2}{64(2n-1)(6+4/n+1/n^2)}\right).
\enas
\end{proposition}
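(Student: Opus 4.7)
The plan is to reduce the proposition to a direct application of Theorem \ref{thm-dips} to the centered array $\widehat{a}_{i,j,k,l}$ already introduced in the paragraph preceding the statement, taking $b=2$. The key preliminary observation is that $\sum_{s\neq t}\widehat{a}^{\pi}_{s,t,s,t}=V_1-\mu$, where $\mu=4|E_1||E_2|/(n(n-1))$: the subtracted centering constant is permutation-invariant and is summed over exactly $n(n-1)$ terms with $s\neq t$, so the passage from $a$ to $\widehat{a}$ merely shifts $V_1$ by its mean $\mu$. Consequently, a concentration bound for $n^{-3/2}\sum_{s\neq t}\widehat{a}^{\pi}_{s,t,s,t}$ supplied by Theorem \ref{thm-dips} is the same as the desired bound for $n^{-3/2}(V_1-\mu)$.

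Before invoking Theorem \ref{thm-dips}, I would verify its hypotheses for $\widehat{a}$. Vanishing when $i=j$ or $k=l$ is immediate from the definition. The symmetries $\widehat{a}_{i,j,k,l}=\widehat{a}_{i,j,l,k}=\widehat{a}_{j,i,l,k}$ reduce to the corresponding symmetries of $a_{i,j,k,l}=c_{i,j}d_{k,l}$, which hold because the edge indicators $c_{i,j}=\mathbf{1}((i,j)\in E_1)$ and $d_{k,l}=\mathbf{1}((k,l)\in E_2)$ are symmetric in their two arguments, and the subtracted constant is the same in every non-diagonal slot. The zero-sum condition $\sum_{i,j,k,l}\widehat{a}_{i,j,k,l}=0$ follows from a direct count: $\sum_{i\neq j,\,k\neq l}a_{i,j,k,l}=\bigl(\sum_{i\neq j}c_{i,j}\bigr)\bigl(\sum_{k\neq l}d_{k,l}\bigr)=(2|E_1|)(2|E_2|)=4|E_1||E_2|$, which is cancelled exactly by the $n^2(n-1)^2$ copies of the centering constant.

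For the uniform bound, I would combine $a_{i,j,k,l}\in\{0,1\}$ with $|E_1|,|E_2|\le n(n-1)/2$, giving $4|E_1||E_2|/(n^2(n-1)^2)\le 1$ and hence $|\widehat{a}_{i,j,k,l}|\le 2$, so one may take $b=2$. Plugging this into the formula $\phi_{b,n}=8(2n-1)b^2(6+4/n+1/n^2)/n$ from Theorem \ref{thm-dips} produces $\phi_{2,n}=32(2n-1)(6+4/n+1/n^2)/n$, and the stated inequality is precisely $\exp(-t^2/(2\phi_{2,n}))$. The proof is essentially bookkeeping, so there is no genuine mathematical obstacle; the only point requiring care is notational, namely distinguishing the statistic $V_1$ from the vertex set denoted $V_1$ in the graph $(V_1,E_1)$.
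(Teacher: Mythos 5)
Your proposal is correct and follows the same route as the paper: center the array $a_{i,j,k,l}=c_{i,j}d_{k,l}$ to obtain $\widehat{a}$, verify the hypotheses of Theorem \ref{thm-dips}, bound $|\widehat{a}_{i,j,k,l}|\le 2$ using $|E_1|,|E_2|\le n(n-1)/2$, and apply the theorem with $b=2$. The paper states this step even more tersely; your added bookkeeping (the identity $\sum_{s\neq t}\widehat{a}^\pi_{s,t,s,t}=V_1-\mu$ and the zero-sum verification) is exactly the right justification.
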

\section{Size biasing and multivariate concentration inequalities}
Let $\mathbf{W}=(W_1,W_2,\ldots,W_k)\in\mathbb{R}^k$, be a random vector with nonnegative coordinate variables. In \cite{cnm}, concentration of measure inequalities were obtained for positive random variable $W$ with positive mean $\mu$ and nonzero variance $\sigma^2$ under a boundedness condition on the coupling $(W,W^s)$, where $W^s$ denotes the size bias transformation of $W$, that is, it satisfies the identity
$$ E(Wf(W))=\mu E(f(W^s))\quad\mbox{for all functions $f$ so that $E(Wf(W))$ is defined.}$$
In this section, we will derive a multivariate analogue of the same result. For $\mathbf{W}$ in consideration, assume $\mu_i>0$ for all $i=1,2,\ldots,k$. The $\mathbf{W}$ size biased variate in direction $i$ denoted by $\mathbf{W}^i$ is defined as the random variable having distribution $dF^i$ with
\beas
dF^i(x_1,x_2,\ldots,x_n)=\frac{x_i}{\mu_i}dF(x_1,x_2,\ldots,x_n),
\enas
where $\mathbf{W}\sim dF$. The random variable
$\mathbf{W}^i$ thus defined satisfies
\beas
E(W_if(\mathbf{W}))=\mu_iE(f(\mathbf{W}^i)),
\enas
for all functions $f$ where the above expectations are finite.
In particular
\bea
E(W_ie^{\boldsymbol{\theta}^t\mathbf{W}})=\mu_i E(e^{\boldsymbol{\theta}^t \mathbf{W}^i}).\label{direction-sizebias}
\ena
For notational purposes let us define for any two vectors $\boldsymbol{\theta},\boldsymbol{\phi}\in\mathbb{R}^k$
\beas
\frac{\bt}{\boldsymbol{\phi}}=\left(\frac{\theta_1}{\phi_1},\frac{\theta_2}{\phi_2},\ldots,\frac{\theta_k}{\phi_k}\right).
\enas
\begin{theorem}\label{thm-sizebias}
Suppose $\mathbf{W}=(W_1,W_2,\ldots, W_k)$ is a random vector with nonnegative coordinate variables, with $\boldsymbol{\mu},\bsig\succ\mathbf{0}$. Suppose $||\mathbf{W}-\mathbf{W}^i||_2\le K$ for some constant $K$ for all $i=1,2,\ldots,k$. If $\sigma_{(1)}=\min_{i=1,2,\ldots,k}\sigma_i$, then for any $\mathbf{t}\succeq\mathbf{0}$, we have
\beas
P\left(\frac{\mathbf{W}-\boldsymbol{\mu}}{\bsig}\succeq \mathbf{t}\right)\le \exp\left(-\frac{||\mathbf{t}||_2^2}{2(K_1+K_2||\mathbf{t}||_2)}\right),
\enas
where
$$
K_1=\frac{2K}{\sigma_{(1)}}||\frac{\boldsymbol{\mu}}{\bsig}||_2\quad\mbox{and}\quad K_2=\frac{K}{2\sigma_{(1)}}.
$$
\end{theorem}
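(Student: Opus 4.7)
The plan is to imitate the univariate size-biased concentration argument of \cite{cnm}, reducing to one dimension via a Chernoff bound along the direction of $\mathbf{t}$ and exploiting the multivariate size-bias identity (\ref{direction-sizebias}) to drive a differential inequality for the moment generating function.

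First I would set $\boldsymbol{\theta}_0 = \mathbf{t}/||\mathbf{t}||_2 \succeq \mathbf{0}$ (assuming $||\mathbf{t}||_2 > 0$; otherwise the bound is trivial) and introduce the scalars $Z = \boldsymbol{\theta}_0^t(\mathbf{W}-\boldsymbol{\mu})/\bsig$, $Z^i = \boldsymbol{\theta}_0^t(\mathbf{W}^i-\boldsymbol{\mu})/\bsig$, and $g(s) = E(e^{sZ})$. Since $(\mathbf{W}-\boldsymbol{\mu})/\bsig \succeq \mathbf{t}$ forces $Z \ge \boldsymbol{\theta}_0^t\mathbf{t} = ||\mathbf{t}||_2$, Markov's inequality gives $P((\mathbf{W}-\boldsymbol{\mu})/\bsig \succeq \mathbf{t}) \le e^{-s||\mathbf{t}||_2}g(s)$ for every $s \ge 0$, so it suffices to establish a Bernstein-type MGF bound $\log g(s) \le s^2 K_1/(2(1 - K_2 s))$: the standard optimizer $s^\ast = ||\mathbf{t}||_2/(K_1 + K_2 ||\mathbf{t}||_2)$ then produces exactly the stated exponent.

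To reach that MGF bound I would differentiate under the expectation and apply the direction-$i$ size bias identity (\ref{direction-sizebias}) coordinatewise, obtaining
\[ g'(s) = E(Ze^{sZ}) = \sum_{i=1}^k \frac{\theta_{0,i}\mu_i}{\sigma_i}\bigl(E(e^{sZ^i}) - g(s)\bigr). \]
The coupling hypothesis $||\mathbf{W}-\mathbf{W}^i||_2 \le K$ combined with Cauchy--Schwarz and $\sigma_j \ge \sigma_{(1)}$ yields the scalar coupling bound $|Z^i - Z| \le K/\sigma_{(1)}$. Feeding this and the convexity estimate (\ref{exp-diff}) into the joint expectation over $(\mathbf{W},\mathbf{W}^i)$ produces
\[ |E(e^{sZ^i}) - g(s)| \le \frac{sK}{2\sigma_{(1)}}\bigl(E(e^{sZ^i}) + g(s)\bigr), \]
which, for $sK/(2\sigma_{(1)})<1$, rearranges to $|E(e^{sZ^i}) - g(s)| \le (sK/\sigma_{(1)})/(1 - K_2 s) \cdot g(s)$. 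Applying Cauchy--Schwarz once more to $\sum_i \theta_{0,i}\mu_i/\sigma_i \le ||\boldsymbol{\mu}/\bsig||_2$ gives the scalar differential inequality
\[ (\log g)'(s) \le \frac{||\boldsymbol{\mu}/\bsig||_2 \cdot sK/\sigma_{(1)}}{1 - K_2 s}, \]
and integrating from $s=0$ with $g(0)=1$, pulling the monotone denominator out of $\int_0^s u\,du$, delivers the required bound on $\log g(s)$ in the theorem's constants (in fact tighter than $s^2 K_1/(2(1-K_2 s))$ by a factor of two in $K_1$, which still implies the stated inequality).

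I expect the main obstacle to sit in the middle step, namely converting the size-biased MGF $E(e^{sZ^i})$ back into a multiple of $g(s)$ using only the bounded coupling between $\mathbf{W}$ and $\mathbf{W}^i$; this is precisely where the self-bounding denominator $(1-K_2 s)^{-1}$ is generated, distinguishing the resulting Bernstein tail from a sub-Gaussian one, and aggregating the $k$ size-bias directions by Cauchy--Schwarz is what forces the factor $||\boldsymbol{\mu}/\bsig||_2$ into the constant $K_1$.
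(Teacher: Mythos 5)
Your proof is correct and runs on the same engine as the paper's: the directional size-bias identity (\ref{direction-sizebias}), the convexity estimate (\ref{exp-diff}) combined with $\|\mathbf{W}-\mathbf{W}^i\|_2\le K$ to control $E(e^{\bt^t\mathbf{W}^i})$ by a self-referential multiple of $m(\bt)$, and a Chernoff argument with the Bernstein-type optimizer $\bt=\mathbf{t}/(K_1+K_2\|\mathbf{t}\|_2)$. The two places where you depart from the paper are genuine streamlinings. First, you project to a scalar $Z=\bt_0^t(\mathbf{W}-\boldsymbol{\mu})/\bsig$ at the outset and work with the one-dimensional MGF $g(s)$, whereas the paper carries the full gradient $\nabla M(\bt)$, bounds each $\partial_i M$, and then takes Euclidean norms; since the paper's final optimization chooses $\bt$ on the ray through $\mathbf{t}$ anyway, the scalar reduction loses nothing and in fact avoids the need for a two-sided bound on $\partial_i M$ that the norm step implicitly requires. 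Second, you integrate the differential inequality $(\log g)'(s)\le \tfrac{K_1 s/2}{1-K_2 s}$ from $0$, picking up the factor $\int_0^s u\,du=s^2/2$, whereas the paper invokes the mean value theorem for $\log M$, which replaces $\int_0^s u\,du$ by the cruder $s\cdot\sup_{0\le z\le s}z=s^2$; this is exactly the factor-of-two gain you observe, i.e.\ your argument would actually support $K_1/2$ in place of $K_1$. Both of your auxiliary claims check out: the Cauchy--Schwarz step $|Z^i-Z|\le\|\bt_0\|_2\,\|(\mathbf{W}^i-\mathbf{W})/\bsig\|_2\le K/\sigma_{(1)}$ is correct, and the rearrangement $|E(e^{sZ^i})-g(s)|\le \tfrac{2K_2 s}{1-K_2 s}\,g(s)$ for $K_2 s<1$ matches the paper's ratio bound on $E(e^{\bt^t\mathbf{W}^i})/E(e^{\bt^t\mathbf{W}})$, and the choice $s^\ast=\|\mathbf{t}\|_2/(K_1+K_2\|\mathbf{t}\|_2)$ satisfies $K_2 s^\ast<1$, so the optimization is legitimate.
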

\begin{proof}
Using (\ref{exp-diff}), we obtain for any $i$,
\beas
E(e^{\boldsymbol{\theta}^t \mathbf{W}^i})-E(e^{\boldsymbol{\theta}^t\mathbf{W}})\le|E(e^{\boldsymbol{\theta}^t \mathbf{W}^i})-E(e^{\boldsymbol{\theta}^t\mathbf{W}})|&\le & E\left(\frac{|\bt^t(\mathbf{W}^i-\mathbf{W})|(e^{\boldsymbol{\theta}^t \mathbf{W}^i}+e^{\boldsymbol{\theta}^t \mathbf{W}})}{2}\right)\\
&\le &E\left( \frac{||\boldsymbol{\theta}||_2||\mathbf{W}^i-\mathbf{W}||_2(e^{\boldsymbol{\theta}^t \mathbf{W}^i}+e^{\boldsymbol{\theta}^t \mathbf{W}})}{2}\right)\\
&\le & \frac{K||\bt||_2E(e^{\boldsymbol{\theta}^t \mathbf{W}^i}+e^{\boldsymbol{\theta}^t \mathbf{W}})}{2}.
\enas
Changing sides, we obtain for $||\bt||_2<2/K$,
\beas
E(e^{\boldsymbol{\theta}^t \mathbf{W}^i})\le \frac{1+\frac{K||\bt||_2}{2}}{1-\frac{K||\bt||_2}{2}}E(e^{\bt^t\mathbf{W}}).
\enas
Hence from (\ref{direction-sizebias}), we obtain, for $||\bt||_2<2/K$
\bea
\frac{\partial m(\bt)}{\partial\theta_i}=E(W_ie^{\bt^t\mathbf{W}})=\mu_i E(e^{\bt^t \mathbf{W}^i})\le \mu_i \frac{1+\frac{K||\bt||_2}{2}}{1-\frac{K||\bt||_2}{2}}E(e^{\bt^t\mathbf{W}})=\mu_i\frac{2+K||\bt||_2}{2-K||\bt||_2}m(\bt).\label{bd-size-bias}
\ena

Denoting  $M(\bt)=E\left(\exp\left(\bt^t\cdot((\mathbf{W}-\boldsymbol{\mu})/\boldsymbol{\sigma})\right)\right)$, we obtain
\bea
M(\bt)=m\left(\frac{\bt}{\boldsymbol{\sigma}}\right)e^{-\bt^t\cdot\boldsymbol{\mu}/\boldsymbol{\sigma}}.\label{reln-Mm}
\ena
Hence denoting
$$\partial_i m(\boldsymbol{\beta})=\left.\frac{\partial m(\bt)}{\partial\theta_i}\right|_{\bt=\boldsymbol{\beta}},$$
for $\boldsymbol{\beta}\in\mathbb{R}^k$ and using (\ref{reln-Mm}) and (\ref{bd-size-bias}), we obtain, for $||\bt/\bsig||_2< 2/K$,
\bea
\frac{\partial M(\bt)}{\partial \theta_i}&=&\frac{1}{\sigma_i}\partial_i m\left(\frac{\bt}{\bsig}\right)\exp\left(-\bt^t\frac{\boldsymbol{\mu}}{\boldsymbol{\sigma}}\right)-\frac{\mu_i}{\sigma_i}m\left(\frac{\bt}{\boldsymbol{\sigma}}\right)\exp\left(-\bt^t\frac{\boldsymbol{\mu}}{\boldsymbol{\sigma}}\right)\nn\\
&\le & \frac{\mu_i}{\sigma_i}\frac{2+K||\bt/\bsig||_2}{2-K||\bt/\bsig||_2}m\left(\frac{\bt}{\bsig}\right)\exp\left(-\bt^t\frac{\boldsymbol{\mu}}{\bsig}\right)-\frac{\mu_i}{\sigma_i}m\left(\frac{\bt}{\boldsymbol{\sigma}}\right)\exp\left(-\bt^t\frac{\boldsymbol{\mu}}{\boldsymbol{\sigma}}\right)\nn\\
&=& \frac{\mu_i}{\sigma_i}M(\bt)\left(\frac{2+K||\bt/\bsig||_2}{2-K||\bt/\bsig||_2}-1\right)=\frac{\mu_i}{\sigma_i}M(\bt)\frac{2K||\bt/\bsig||_2}{2-K||\bt/\bsig||_2}.\label{bd-del-i}
\ena
Since (\ref{bd-del-i}) holds for all $i=1,2,\ldots,k$, we obtain, for all $||\bt/\bsig||_2<2/K$
\bea
||\nabla(M(\bt))||_2\le ||\frac{\boldsymbol{\mu}}{\bsig}||_2M(\bt)\frac{2K||\bt/\bsig||_2}{2-K||\bt/\bsig||_2}\le ||\frac{\boldsymbol{\mu}}{\bsig}||_2 M(\bt)\frac{2K||\bt||_2}{\sigma_{(1)}(2-K||\bt/\bsig||_2)}.\label{bd-nabla-M}
\ena
Continuing as in the proof of Theorem \ref{thm-main1}, (\ref{bd-nabla-M}) yields that for all $\bt\in\mathbb{R}^k$ with $||\bt/\bsig||_2< 2/K$, we have
\beas
||\nabla(\log(M(\bt)))||_2=\frac{||\nabla M(\bt)||_2}{M(\bt)}\le ||\frac{\boldsymbol{\mu}}{\bsig}||_2\frac{2K||\bt||_2}{\sigma_{(1)}(2-K||\bt/\bsig||_2)}.
\enas
Using the mean value theorem, for all $0\preceq\bt\in\mathbb{R}^k$ with $||\bt/\bsig||_2< 2/K$,
\beas
\log(M(\bt))=\nabla(\log(M(\mathbf{z}))).\bt,
\enas
for some $\mathbf{0}\preceq\mathbf{z}\preceq\bt$. Hence $||\mathbf{z}/\bsig||_2\le ||\bt/\bsig||_2< 2/K$,
\bea
|\log(M(\bt))|\le ||\nabla(\log(M(\mathbf{z})))||_2||\bt||_2\le ||\frac{\boldsymbol{\mu}}{\bsig}||_2\frac{2K||\mathbf{z}||_2}{\sigma_{(1)}(2-K||\mathbf{z}/\bsig||_2)}||\bt||_2.\label{bd-z}
\ena
Note that
$$ ||\bt||_2< \frac{1}{K_2}\Rightarrow ||\frac{\bt}{\bsig}||_2< \frac{2}{K}.$$
 Since $\mathbf{0}\preceq\mathbf{z}\preceq\bt$, if $||\bt||_2<1/K_2$, (\ref{bd-z}) yields
\beas
|\log(M(\bt))|\le ||\frac{\boldsymbol{\mu}}{\bsig}||_2\frac{2K||\bt||^2_2}{\sigma_{(1)}(2-K||\bt/\bsig||_2)}\le\frac{K_1||\bt||^2_2}{2(1-K_2||\bt||_2)} .
\enas
Hence if $\bt\succeq\mathbf{0}$ and $||\bt||_2<1/K_2$, we obtain
\bea
P\left(\frac{\mathbf{W}-\boldsymbol{\mu}}{\bsig}\succeq \mathbf{t}\right)\le P\left(\bt^t\frac{\mathbf{W}-\boldsymbol{\mu}}{\bsig}\ge \bt^t \mathbf{t}\right)\le e^{-\bt^t\mathbf{t}}M(\bt)\le \exp \left(-\bt^t\mathbf{t}+\frac{K_1||\bt||^2_2}{2(1-K_2||\bt||_2)}\right).\label{markov-sizebias}
\ena
Using $\bt=\mathbf{t}/(K_1+K_2||\mathbf{t}||_2)$ in (\ref{markov-sizebias}), we obtain
\beas
P\left(\frac{\mathbf{W}-\boldsymbol{\mu}}{\bsig}\succeq \mathbf{t}\right)\le \exp\left(-\frac{||\mathbf{t}||_2^2}{2(K_1+K_2||\mathbf{t}||_2)}\right).
\enas
\end{proof}
\section{An application}
Let $\tau_1,\tau_2\in S_m$ be two fixed permutations from $S_m$, the permutation group on $m$ elements. Let $\pi$ be a permutation selected uniformly at random from $S_n$, where $n\ge m$. We consider the bivariate random variable $\mathbf{W}=(W_1,W_2)$ where $W_1$ counts the number of times pattern $\tau_1$ appears in $\pi$ and $W_2$ counts the number of times $\tau_2$ appears in $\pi$. Concentration of measure inequalities for $W_1$ has been obtained in \cite{com}. Using Theorem \ref{thm-sizebias}, we can in fact obtain concentration bounds for $(W_1,W_2)$.
\par To fix notations, for $n \ge m \ge 3$, let $\pi$ and $\tau$ be
permutations of ${\cal V}=\{1,\ldots,n\}$ and $\{1,\ldots,m\}$, respectively,
and let
$$
{\cal V}_\alpha = \{\alpha,\alpha+1,\ldots,\alpha+m-1\} \quad \mbox{for $\alpha \in {\cal V}$,}
$$
where addition of elements of ${\cal V}$ is modulo $n$.
We say the pattern $\tau$ appears at location $\alpha \in {\cal V}$ if
the values $\{\pi(v)\}_{v \in {\cal V}_\alpha}$ and
$\{\tau(v)\}_{v \in {\cal V}_1}$ are in the same relative order.
Equivalently, the pattern $\tau$ appears at $\alpha$ if
and only if $\pi(\tau^{-1}(v)+\alpha-1), v  \in {\cal V}_1$ is an
increasing sequence. When $\tau=\iota_m$, the identity permutation of length $m$, we
say that $\pi$ has a rising sequence of length $m$ at position
$\alpha$. Rising sequences are studied in
\cite{BaDia} in connection with card tricks and card shuffling.
\par
Letting $\pi$ be chosen uniformly from all permutations of $\{1,\ldots,n\}$, and
$X_{\alpha,\tau}$ the indicator
that $\tau$ appears at $\alpha$,
$$
X_{\alpha,\tau}(\pi(v), v \in {\cal V}_\alpha) = 1(\pi(\tau^{-1}(1)+\alpha -1 ) < \cdots <
\pi(\tau^{-1}(m)+\alpha-1)),
$$
the sum $W=\sum_{\alpha \in {\cal V}}
X_{\alpha,\tau}$ counts the number of $m$-element-long segments of $\pi$
that have the same relative order as $\tau$.

Let
$\sigma_\alpha$ be the permutation of $\{1,\ldots,m\}$ for which
$$
\pi(\sigma_\alpha(1)+\alpha-1)<\cdots < \pi(\sigma_\alpha(m)+\alpha-1).
$$
\beas
\pi^\alpha_1(v)=\left\{
\begin{array}{cl}
\pi(\sigma_\alpha(\tau_1(v-\alpha+1))+\alpha-1), & v \in {\cal V}_\alpha\\
\pi(v) & v \not \in {\cal V}_\alpha.
\end{array}
\right.
\enas

In other words $\pi^\alpha_1$ is the permutation
$\pi$ with the values $\pi(v), v \in {\cal V}_\alpha$ reordered so that
$\pi^\alpha_1(\gamma)$ for $\gamma \in {\cal V}_\alpha$ are
in the same relative order as $\tau_1$. Similarly we can define $\pi^\alpha_2$ corresponding to $\tau_2$.
\par To obtain $\mathbf{W}^i$, the $\mathbf{W}$ size biased variate in direction $i$ for $i=1,2$, pick an index $\beta$ uniformly from $\{1,2,\ldots,n\}$ and set $W_j^i=\sum_{\alpha\in{\cal V}} X_{\alpha,\tau_j}(\pi^\beta_i)$. Then $\mathbf{W}^i=(W^i_1,W^i_2)$, for $i=1,2$.
\par The fact that we indeed obtain the desired size bias variates follows from results in \cite{goldstein1}. Since both $\pi_1^\beta$ and $\pi_2^\beta$ agree with $\pi$ on all the indices leaving out ${\cal V}_\beta$ and $|{\cal V}_\beta|=m$, we obtain $|W^i_j-W_j|\le 2m-1$ for $i,j=1,2$. Hence, $||\mathbf{W-W}^i||_2\le (2m-1)\sqrt{2}$ for $i=1,2$.
\par For $\tau\in S_m$, let $I_k(\tau)$ be the indicator that $\tau(1),\ldots,\tau(m-k)$ and $\tau(k+1),\ldots,\tau(m)$ are in the same relative order. Following the calculations in \cite{com}, we obtain
$$ \mu_i=E(W_i)=\frac{n}{m!}\quad\mbox{and}\quad\sigma^2_i=\mbox{var}(W_i)=n\left( \frac{1}{m!}\left(1-\frac{2m-1}{m!}\right)+2 \sum_{k=1}^{m-1} \frac{I_k(\tau_i)}{(m+k)!}\right).$$
Since $0\le I_k\le 1$, the variance lower bound is obtained when $I_k=0$ yielding
$$\sigma^2_{(1)}\ge \frac{n}{m!}\left(1-\frac{2m-1}{m!}\right).$$
Since, the constants $K_1$ and $K_2$ Theorem \ref{thm-sizebias} can be replaced by larger constants, we can apply it with
$$K_1=\frac{(8m-4)m!}{m!-2m+1}\quad\mbox{and}\quad K_2=\frac{(2m-1)m!}{\sqrt{2n(m!-2m+1)}},$$
to obtain concentration inequality for $\mathbf{W}=(W_1,W_2).$

\end{document}